\newcommand\bigw{\scalebox{.85}[0.85]{$\bigwedge$}}
\numberwithin{equation}{section}
\newcommand{\thmref}[1]{Theorem~\ref{#1}}
\newcommand{\secref}[1]{Section~\ref{#1}}
\newcommand{\lemref}[1]{Lemma~\ref{#1}}
\newcommand{\propref}[1]{Proposition~\ref{#1}}
\newcommand{\eqnref}[1]{(\ref{#1})}
\newcommand{\rmkref}[1]{Remark~\ref{#1}}
\newtheorem{theorem}{Theorem}[section]
\newtheorem{lemma}[theorem]{Lemma}
\newtheorem{proposition}[theorem]{Proposition}
\theoremstyle{definition}
\theoremstyle{remark}
\newtheorem{remark}[theorem]{Remark}
\newcommand{\RNum}[1]{\uppercase\expandafter{\romannumeral #1\relax}}
\begin{document}

\title[Unitary branching rules for $\mathfrak{gl}_{m|n}$]{Unitary branching rules for \\ the general linear Lie superalgebra}
\author{Mark Gould and Yang Zhang}
\address{School of Mathematics and Physics, The University of Queensland, St Lucia, QLD 4067, Australia}

\email{m.gould1@uq.edu.au}
\email{yang.zhang@uq.edu.au}
\begin{abstract}
 In terms of highest weights, we establish branching rules  for finite dimensional unitary simple modules of the general linear Lie superalgebra $\mathfrak{gl}_{m|n}$. Our proof uses the Howe duality for $\mathfrak{gl}_{m|n}$, as well as  branching rules for Kac modules. Moreover, we derive the branching rules of  type 2 unitary simple $\mathfrak{gl}_{m|n}$-modules, which are dual to the aforementioned unitary modules.
\end{abstract}

\subjclass{17B10, 05E10}
\keywords{Lie superalgebras, branching rules, unitary modules, Howe duality}

\maketitle

\section{Introduction}

Branching rules describe how a module of an algebra decomposes when restricted to a subalgebra. These rules are fundamental  in the  representation theory of Lie algebras, leading to important constructions such as Gelfand-Tsetlin (GT) bases, which are indispensable for analysing  the structure of simple modules \cite{GW09, Mo06}. Recently, there has been substantial interest in generalising branching rules to Lie superalgebras \cite{SV10,Mo11,GIW13,CPT15,LZ24}. However, this task is far more challenging than in the classical case, primarily due to the generic non-semisimplicity of representations of Lie superalgebras.  Most of the existing results focus on polynomial modules of the general linear superalgebra, where certain techniques, such as the Howe duality \cite{CLZ04,CW01,CPT15,LZ24}  and Young diagrams \cite{BR87,Mo11}, have been applied to describe the module decompositions. 

In this note, we are concerned with finite-dimensional unitary $\mathfrak{gl}_{m|n}$-modules, which form a semisimple category. Unitary modules for Lie superalgebras were first introduced in \cite{SNR77} as a generalisation of Hermitian representations of Lie algebras. It was also shown in \cite{SNR77} that  there are two  types of unitary $\mathfrak{gl}_{m|n}$-modules, distinguished by the  star operations on $\mathfrak{gl}_{m|n}$.  For brevity, we refer to type 1 unitary modules simply as unitary modules.  Subsequently, Gould and Zhang \cite{GZ90} classified both types of  unitary $\mathfrak{gl}_{m|n}$-modules, which include all  polynomial modules  and  certain typical non-polynomial modules. 

Our main result (\thmref{thm: main}) provides a branching rule, expressed in terms of highest weights, that describes the decomposition of a simple unitary $\mathfrak{gl}_{m|n}$-module into simple  unitary $\mathfrak{gl}_{m|n-1}$-modules for all positive integers $m,n$. For typical  unitary modules, the proof relies on a more general branching rule for Kac modules. In the atypical case, we employ Howe duality to derive the unitary module decompositions. Moreover, we determine  branching rules (\thmref{thm: maintype2}) for type 2 unitary $\mathfrak{gl}_{m|n}$-modules, which are related to  the type 1 unitary modules via duality. 

Generally, for any finite dimensional simple $\mathfrak{gl}_{m|n}$-module, \cite{GIW13} provides some  necessary conditions for its decomposition into simple $\mathfrak{gl}_{m|n-1}$-modules.  In \cite{GIW14, GIW15}, these necessary conditions are applied to parameterise the basis vectors of a unitary simple $\mathfrak{gl}_{m|n}$-module in terms of  GT patterns. This yields explicit matrix element formulas that describe the action of $\mathfrak{gl}_{m|n}$ on the corresponding GT basis; see also \cite{SV10,Mo11} for the matrix element formulas for covariant representations.  This paper establishes a branching rule which is both necessary and sufficient for the decomposition of a finite-dimensional unitary simple $\mathfrak{gl}_{m|n}$-module of both types under the action of  the subalgebra $\mathfrak{gl}_{m|n-1}$. In particular, our branching rules address cases that were not carefully considered in \cite{GIW14, GIW15} for  unitary modules of both types; see \rmkref{rmk: comp}. Therefore, our branching rules, expressed  in terms of highest weights,  provide precise GT patterns (cf. \cite{SV10,GIW14,GIW15}) for  unitary modules of both types. 

This paper is organised as follows. Section \ref{sec: pre} covers the basics of $\mathfrak{gl}_{m|n}$,  including the Howe duality for $\mathfrak{gl}_{m|n}$,  and recalls branching rules and Pieri's rule for the general linear Lie algebra. Section \ref{sec: mainres} presents  the main result, and divides the proof of the branching rules for type 1 unitary modules into typical and atypical cases. Section \ref{sec: type2uni} derives the branching rules for type 2 unitary modules.

\vspace*{0.2cm}
\noindent {\bf Notation.} Throughout this paper, we work over the complex field $\mathbb{C}$, unless otherwise stated. We denote by $\mathbb{Z}$ the set of integers,  by   $\mathbb{Z}_+$ the set of non-negative integers,  and by $\mathbb{Z}_2=\{\bar{0}, \bar{1}\}$ the set of integers modulo 2. For any vector space $V$, we denote $V^*:={\rm Hom}_{\mathbb{C}}(V, \mathbb{C})$, the dual space of $V$. For any Lie superalgebra $\mathfrak{g}$, we denote by ${\rm U}(\mathfrak{g})$ its universal enveloping algebra. For any superspace $V=V_{\bar{0}}\oplus V_{\bar{1}}$, define the parity functor $[\ ]: V\rightarrow \mathbb{Z}_2$  by $[v]=i$ if $v\in V_i$ for $i\in \mathbb{Z}_2$. For any positive integers $a,b$, let $\delta_{ab}$ denote the Kronecker delta function. We write $\otimes$ for $\otimes_{\mathbb{C}}$.

\section{Preliminaries}\label{sec: pre}

\subsection{The general linear Lie superalgebra}
Let $\mathbb{C}^{m|n}$ denote the complex superspace of dimension $m|n$ with the standard homogeneous  basis $e_{i}, 1\leq i\leq m+n$, such that $\{e_1, \dots e_m\}$ is a basis for  the even subspace $\mathbb{C}^{m|0}$ and $\{e_{m+1}, \dots e_{m+n}\}$ is a basis for  the odd subspace $\mathbb{C}^{0|n}$. As a complex superspace, the \emph{general Lie superalgebra} $\mathfrak{gl}_{m|n}$ consists of $(m+n)\times (m+ n)$-block matrices  with respect to the standard  homogeneous basis, i.e., 
\[ \mathfrak{gl}_{m|n}:= \Big\{ \begin{pmatrix}
 A & B\\ 
 C & D
\end{pmatrix} \mid  A\in \mathfrak{M}_{m,m}, B\in \mathfrak{M}_{m, n}, C\in \mathfrak{M}_{n,m}, D\in \mathfrak{M}_{n, n} \Big\}, 
\] 
 where $\mathfrak{M}_{p,q}$ denotes the complex space of $(p\times q)$-matrices for any positive integers $p,q$. The even subalgebra $(\mathfrak{gl}_{m|n})_{\bar{0}}=\mathfrak{gl}_m \oplus \mathfrak{gl}_n$ consists of matrices with $B$ and $C$ being zero blocks, while the odd subspace $(\mathfrak{gl}_{m|n})_{\bar{1}}$ consists of matrices with $A$ and $D$ being zero blocks. The Lie bracket is defined by $[X,Y]= XY-(-1)^{[X][Y]}YX$ for any homogeneous $X,Y\in \mathfrak{gl}_{m|n}$.

For any pair $1\leq i,j\leq m+n$, denote by $E_{ij}$ the matrix unit of $\mathfrak{gl}_{m|n}$ such that $E_{ij}e_k= \delta_{jk}e_i$ for all $1\leq k\leq m+n$. Then $\{E_{ij}\mid 1\leq i,j\leq m+n\}$ is a linear basis of $\mathfrak{gl}_{m|n}$. The \emph{Cartan subalgebra} $\mathfrak{h}_{m|n}$ is spanned by  $\{E_{ii}\mid 1\leq i\leq m+n \}$, while the \emph{standard Borel subalgebra} $\mathfrak{b}_{m|n}$ containing $\mathfrak{h}_{m|n}$ is spanned by  $\{E_{ij}\mid 1\leq i\leq j\leq m+n \}$.

Let $\{\epsilon_i \mid 1 \leq i \leq m+n\}$ be the dual basis of $\mathfrak{h}_{m|n}^*$ such that $\epsilon_i(E_{jj}) = \delta_{ij}$ for $1 \leq i,j \leq m+n$. The set of positive roots relative to $\mathfrak{b}_{m|n}$ is given by $\Delta^+ = \{\epsilon_i - \epsilon_j \mid 1 \leq i < j \leq m+n\}$. For convenience, we write $\delta_{\mu}= \epsilon_{m+\mu}$ for $1\leq \mu \leq n$. The sets of even and odd positive roots are denoted by $\Delta^+_{\bar{0}}$ and $\Delta^+_{\bar{1}}$, respectively, where
\begin{align*}
  \Delta^+_{\bar{0}} &= \{\epsilon_i - \epsilon_j \mid 1 \leq i < j \leq m\} \cup \{\delta_{\mu} - \delta_{\nu} \mid 1 \leq \mu < \nu \leq n\},\\
  \Delta^+_{\bar{1}} &= \{\epsilon_i - \delta_{\mu} \mid 1 \leq i \leq m, 1\leq \mu\leq n\}.
\end{align*}
 Let $(-,-): \mathfrak{h}^*_{m|n}\times \mathfrak{h}^*_{m|n}\rightarrow \mathbb{C}$ be the symmetric bilinear form defined by
\[(\epsilon_i, \epsilon_j)=\delta_{ij}, \quad (\delta_{\mu}, \delta_{\nu})=-\delta_{\mu,\nu}, \quad (\epsilon_i, \delta_{\mu})=0, \]
for $1\leq i,j \leq m$ and  $1\leq \mu,\nu\leq n$. We denote by $\rho=\rho_{m|n}$ the half graded sum of positive roots, and more explicitly,   
\[ \rho=\frac{1}{2}\sum_{i=1}^m (m-n-2i+1)\epsilon_i+ \frac{1}{2}\sum_{\mu=1}^n(m+n-2\mu+1)\delta_{\mu}.   \]

Every finite dimensional simple $\mathfrak{gl}_{m|n}$-module $L(\Lambda)=L^{m|n}(\Lambda)$ is uniquely characterised by the highest weight 
\[\Lambda=(\lambda_1,\dots,  \lambda_m, \omega_1, \dots, \omega_n):=\sum_{i=1}^m \lambda_i\epsilon_i + \sum_{\mu=1}^n\omega_{\mu} \delta_{\mu}\in \mathfrak{h}_{m|n}^*, \]
where $\lambda_i-\lambda_{i+1}\in \mathbb{Z}_+$ and $\omega_\mu-\omega_{\mu+1}\in \mathbb{Z}_+$ for all relevant $i,\mu$, that is, $\Lambda$ is a dominant integral weight of the even subalgebra $\mathfrak{gl}_m \oplus \mathfrak{gl}_n$. We denote by $P_+^{m|n}$ the set of all $\mathfrak{gl}_m \oplus \mathfrak{gl}_n$-dominant integral weights. Following Kac \cite{Kac77a}, we say that $\Lambda$ and the associated simple module $L(\Lambda)$ are \emph{typical} if 
\[\prod_{\alpha\in \Delta^+_{\bar{1}}}(\Lambda+\rho, \alpha)\neq 0; \]
otherwise, they are called \emph{atypical}.

Recall from \cite{Kac77a} that every finite dimensional simple $\mathfrak{gl}_{m|n}$-module $L(\Lambda)$ can be constructed as follows. Let $L_0(\Lambda)$ be the simple $\mathfrak{g}_{\bar{0}}$-module with highest weight $\Lambda\in P_+^{m|n}$. Note that $\mathfrak{g}=\mathfrak{gl}_{m|n}$ admits a $\mathbb{Z}$-grading:
\begin{equation}\label{eq: Zgrad}
  \mathfrak{g}= \mathfrak{g}_{-1}\oplus \mathfrak{g}_0 \oplus \mathfrak{g}_{1},
\end{equation}
where $\mathfrak{g}_0=\mathfrak{gl}_m \oplus \mathfrak{gl}_n$, and $\mathfrak{g}_{1}$ (resp. $\mathfrak{g}_{-1}$) is spanned by all $E_{ij}$ (resp. $E_{ji}$) with $1\leq i\leq m<j \leq m+n$. Then $L_0(\Lambda)$ can be extended as a $\mathfrak{g}_0\oplus \mathfrak{g}_{1}$-module with  $\mathfrak{g}_1$ acting trivially. We define the Kac module $K(\Lambda)$ with highest weight $\Lambda\in P_+^{m|n}$ by 
\begin{equation}\label{eq: isoK}
	K(\Lambda):= {\rm U}(\mathfrak{g}) \bigotimes_{{\rm U}(\mathfrak{g}_{0}\oplus \mathfrak{g}_{1})} L_{0}(\Lambda)= {\rm U}(\mathfrak{g}_{-1})\otimes L_{0}(\Lambda),
\end{equation}
where the right equality follows from  the PBW theorem for ${\rm U}(\mathfrak{g})$. Note that ${\rm U}(\mathfrak{g}_{-1})$ is the exterior algebra $\bigw{(\mathfrak{g}_{-1})}$ of the vector space $\mathfrak{g}_{-1}$, which carries the structure of a $\mathfrak{g}_0$-module.  This equips  $K(\Lambda)$ with a natural $\mathbb{Z}$-grading $K(\Lambda)= \bigoplus_{k=0}^{mn} K(\Lambda)_k$, where $K(\Lambda)_k=\bigw^{k}{\mathfrak{g}_{-1}}\otimes L_0(\Lambda)$ is a tensor product of two $\mathfrak{g}_0$-modules. 

Recall from \cite{Kac77b} that   $K(\Lambda)$ is a simple $\mathfrak{gl}_{m|n}$-module if and only if $\Lambda$ is typical. If $\Lambda$ is atypical, then $K(\Lambda)$ is not simple and contains a unique proper maximal submodule $M(\Lambda)$. Then $L(\Lambda)$ arises as the unique simple quotient  $K(\Lambda)/M(\Lambda)$. It follows the construction that $L(\Lambda)$ has a natural $\mathbb{Z}$-grading:
\begin{equation}\label{eq: Lgrad}
  L(\Lambda)= \bigoplus_{k=0}^{d_{\Lambda}} L(\Lambda)_k,
\end{equation} 
where $0\leq d_{\Lambda}\leq mn$, and  each summand $L(\Lambda)_k$ is a $\mathfrak{g}_0$-module. In particular, $L(\Lambda)_0=L_0(\Lambda)$ is a simple  $\mathfrak{g}_0$-module.

\subsection{Unitary modules}

We recall some basic facts about $\ast$-algebras and their unitary modules \cite{SNR77,GZ90,CLZ04}. A $\ast$-superalgebra is an associative superalgebra $A=A_{\bar{0}}\oplus A_{\bar{1}}$ equipped with an even anti-linear anti-involution $\phi: A\rightarrow A$, i.e., 
\[ 
\begin{aligned}
 \phi(ca)&=\bar{c}a, \quad c\in \mathbb{C}, a\in A, \\
 \phi(ab)&= \phi(b)\phi(a), \quad a,b\in A, 
\end{aligned}
\]
where $\bar{c}$ denote the complex conjugate of $c$. A $\ast$-superalgebra homomorphism $f: (A, \phi)\rightarrow (A', \phi')$ is a superalgebra homomorphism satisfying $f\circ \phi= \phi'\circ f$.  

Let $(A,\phi)$ be a $\ast$-superalgebra and let $V$ be a $\mathbb{Z}_2$-graded $A$-module. A Hermitian form $\langle -,- \rangle$ on $V$ is said to be \emph{positive definite} if $\langle v, v \rangle > 0$ for all $v \neq 0$, and \emph{contravariant} if $\langle av, w \rangle = \langle v, \phi(a)w \rangle$ for all $a \in A$ and $v, w \in V$. An $A$-module equipped with a positive definite contravariant Hermitian form is called a \emph{unitary $A$-module}.  

As an example, the following anti-linear anti-involution defines a  $\ast$-structure of  ${\rm U}(\mathfrak{gl}_{m|n})$:
\[ \phi(E_{ij})= E_{ji}, \quad 1\leq i,j\leq m+n. \]
A simple $\mathfrak{gl}_{m|n}$-module $L(\Lambda)$ is said to be (type 1) \emph{unitary}\footnote{This is called a  type 1 star module in \cite{GZ90}. Type 2 star (unitary) modules will be discussed separately in \secref{sec: type2uni}  } if there exists a positive definite Hermitian form $\langle-, -\rangle$ on $L(\Lambda)$ such that for all $v,w\in  L(\Lambda)$, 
\begin{equation}\label{eq: Her}
  \langle E_{ij}v, w\rangle= \langle v, E_{ji} w\rangle, \quad 1\leq i,j \leq m+n.
\end{equation}
In particular, when $n=0$, these definitions apply directly to the Lie algebra $\mathfrak{gl}_m$. It is well known that a simple $\mathfrak{gl}_m$-module $L(\Lambda)$ is unitary if and only if $\Lambda$ is real and dominant integral.  

Clearly,  any positive definite contravariant Hermitian form on $L(\Lambda)$ induces  a  unitary $\mathfrak{g}_0$-module structure on $L(\Lambda)_0$ by restriction.  Thus, a necessary condition for a $\mathfrak{gl}_{m|n}$-module $L(\Lambda)$ to be unitary  is that $\Lambda$ is a real $\mathfrak{g}_0$-dominant integral weight. 

We denote by $\Lambda\in D_+^{m|n}$ the set of   real $\mathfrak{g}_0$-dominant integral weights. 

\begin{theorem}\cite{GZ90}\label{thm: unitary}
Let $\Lambda\in D_+^{m|n}$. The finite-dimensional simple  $\mathfrak{gl}_{m|n}$-module $L(\Lambda)$ is  unitary if and only if 
\begin{enumerate}
\item $(\Lambda+ \rho, \epsilon_m-\delta_n)>0 $; or 
\item there exists $\mu\in \{1, 2, \dots, n\}$ such that 
\[(\Lambda + \rho, \epsilon_m-\delta_{\mu}) = (\Lambda, \delta_{\mu}-\delta_n)=0. \]
\end{enumerate}
\end{theorem}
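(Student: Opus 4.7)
The plan is to analyze the (essentially unique) contravariant Hermitian form on $L(\Lambda)$ via the $\mathbb{Z}$-grading $L(\Lambda)=\bigoplus_{k} L(\Lambda)_k$ from \eqnref{eq: Lgrad}, and to pin down when such a form is positive definite by computing norms of descendants of the highest weight vector $v_\Lambda$ under the odd lowering operators $E_{m+\mu, i}$. Restriction of any unitary structure to $L(\Lambda)_0= L_0(\Lambda)$ immediately forces $\Lambda\in D_+^{m|n}$, so this is assumed throughout.

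For \textbf{necessity} of condition (1) or (2), the prototypical computation is
\[
\|E_{m+n,m}\, v_\Lambda\|^2 \;=\; \langle v_\Lambda, E_{m,m+n} E_{m+n,m}\, v_\Lambda\rangle \;=\; (\lambda_m+\omega_n)\,\|v_\Lambda\|^2,
\]
which uses contravariance, the super-bracket $[E_{m,m+n},E_{m+n,m}]=E_{m,m}+E_{m+n,m+n}$, and the fact that $E_{m,m+n}$ annihilates $v_\Lambda$. This already forces $\lambda_m+\omega_n=(\Lambda,\epsilon_m-\delta_n)\ge 0$. More generally, iterated applications of odd lowerings produce norms expressible in terms of $(\Lambda+\rho,\epsilon_{i}-\delta_{\mu})$ once one accounts for the Weyl vector shifts picked up deeper in the grading. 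If condition (1) fails, then some such descendant must vanish in $L(\Lambda)$, forcing $L(\Lambda)$ to be a proper (hence atypical) quotient of $K(\Lambda)$; tracking which descendants vanish, together with $(\Lambda,\delta_\mu-\delta_n)=\omega_n-\omega_\mu$, then pins the atypicality pattern to exactly condition (2).

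For \textbf{sufficiency} I would split into two cases. Under condition (1), I would first verify that $(\Lambda+\rho,\epsilon_m-\delta_n)>0$ combined with $\mathfrak{g}_0$-dominance forces $(\Lambda+\rho,\epsilon_i-\delta_\mu)>0$ for all $i,\mu$, so $\Lambda$ is typical and $L(\Lambda)=K(\Lambda)=\bigwedge(\mathfrak{g}_{-1})\otimes L_0(\Lambda)$. A contravariant Hermitian form is then built inductively along the grading starting from the unitary form on $L_0(\Lambda)$, with positivity on $K(\Lambda)_{k+1}$ following from positivity on $K(\Lambda)_k$ together with positivity of the eigenvalues of the transition operators $E_{i,m+\mu}E_{m+\mu,i}$, which are controlled by the (strictly positive) quantities $(\Lambda+\rho,\epsilon_i-\delta_\mu)$. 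Under condition (2), $\Lambda$ is atypical and $L(\Lambda)$ is a proper quotient of $K(\Lambda)$, so this direct construction breaks down; here I would invoke the Howe duality machinery from \secref{sec: pre} to realize $L(\Lambda)$ as an isotypic component of a $(\mathfrak{gl}_{m|n},\mathfrak{gl}_N)$-bimodule coming from a polynomial/oscillator action on $\mathbb{C}^{m|n}\otimes\mathbb{C}^N$, which carries a canonical Fock-type positive definite Hermitian form descending to each $\mathfrak{gl}_{m|n}$-isotypic component.

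The \textbf{main obstacle} is expected to be the atypical case. One must verify that condition (2) matches \emph{exactly} the set of atypical highest weights appearing in the Howe duality decomposition, and conversely that no other atypical $\Lambda$ can yield a unitary $L(\Lambda)$. This amounts to a careful combinatorial matching between the vanishing pattern of descendants of $v_\Lambda$ derived from the norm computations above and the partition/tableau combinatorics governing the Howe decomposition.
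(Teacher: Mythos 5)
The paper does not actually prove this statement: it is imported verbatim from \cite{GZ90} (and only its consequences, e.g.\ \rmkref{rmk: typ} and \lemref{lem: atypoly}, are developed here), so there is no internal proof to measure you against; your proposal has to be judged as a reconstruction of the Gould--Zhang classification. As such it is a reasonable outline of that argument (contravariant form, norms of odd descendants, positivity of the Kac-module form in the typical case, a positive-definite Fock-type realization in the atypical case), but it is a plan rather than a proof, and it contains one step that fails as stated. Condition (2) allows non-integral highest weights: e.g.\ the one-dimensional modules of weight $(s,\dots,s,-s,\dots,-s)$ with arbitrary real $s$, and more generally any $\Lambda$ satisfying (2) with the $\lambda_i,\omega_\mu$ shifted by a common real constant. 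Such $L(\Lambda)$ cannot occur inside $S(\mathbb{C}^d\otimes\mathbb{C}^{m|n})$, whose $\mathfrak{gl}_{m|n}$-constituents are the polynomial modules $L(\lambda^{\natural})$ with integral weights. You must first twist: under (2) one has $\omega_\mu=\cdots=\omega_n$ and $\lambda_m+\omega_\mu=\mu-1$, so $L(\Lambda^{(\omega_n)})=L(\Lambda)\otimes\mathbb{C}_{\omega_n}$ is polynomial (this is exactly \lemref{lem: atypoly}), it is unitary by \thmref{thm: polyunitary}, and unitarity survives tensoring with the real one-dimensional module $\mathbb{C}_{-\omega_n}$. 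Without this twist your Howe-duality step simply does not apply to the modules the theorem is about.

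The second gap is the necessity direction, which is the hard half of \cite{GZ90} and which you only gesture at. Your prototypical computation gives $\lambda_m+\omega_n=(\Lambda,\epsilon_m-\delta_n)\geq 0$, i.e.\ $(\Lambda+\rho,\epsilon_m-\delta_n)\geq 1-n$, which is far weaker than ``(1) or (2)'' for $n>1$. To pin down (2) one needs, at a minimum, the norms of iterated descendants such as $E_{m+n,m}E_{m+n-1,m}\cdots E_{m+n-k+1,m}v_\Lambda$, whose squared norms are products $\prod_{j=0}^{k-1}(\Lambda+\rho,\epsilon_m-\delta_{n-j})$, together with an analysis of the eigenvalues of the level-one operators $\sum_a E_{i,m+\mu}E_{m+\mu,i}$ on the $\mathfrak{g}_0$-isotypic components (in \cite{GZ90} this is done via explicit eigenvalue/characteristic-identity computations); the phrase ``tracking which descendants vanish \ldots pins the atypicality pattern to exactly condition (2)'' is precisely the part that needs proof. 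Relatedly, in your typical-case induction the relevant eigenvalues at level $k$ are not literally $(\Lambda+\rho,\epsilon_i-\delta_\mu)$ but shifted versions attached to the $\mathfrak{g}_0$-highest weights occurring in $\bigw^k(\mathfrak{g}_{-1})\otimes L_0(\Lambda)$, and one must check that condition (1) plus dominance keeps all of them positive. So the architecture is right, but both the atypical sufficiency (as written) and the necessity argument need repair and substantial detail before this constitutes a proof.
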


\begin{remark}\label{rmk: typ}
  Note that if $(\Lambda+\rho, \epsilon_m-\delta_{n})>0$, then for any $1\leq i\leq m$ and $1\leq \mu \leq n$, 
 		\[
\begin{aligned}
 		 (\Lambda+ \rho,\epsilon_i-\delta_{\mu}) &= (\Lambda+\rho,\epsilon_i-\epsilon_m + \epsilon_m-\delta_n+ \delta_n-\delta_{\mu})\\
 		 &=(\lambda_i-\lambda_m + m-i)+(\Lambda+\rho, \epsilon_m-\delta_{n})+ (\omega_{\mu}-\omega_n + n-\mu)>0.
\end{aligned}
 \]
 It follows that any unitary module $L(\Lambda)$ with $\Lambda\in D^{m|n}_+$ is either typical or atypical, depending on whether condition (1) or (2) is satisfied in \thmref{thm: unitary}.
\end{remark}

\subsection{The \texorpdfstring{$(\mathfrak{gl}_d, \mathfrak{gl}_{m|n})$}{(gld,glmn)}-Howe duality}
Fix a  positive integer $d$, and let $S(\mathbb{C}^d\otimes \mathbb{C}^{m|n})$ denote the supersymmetric algebra of $\mathbb{C}^d\otimes \mathbb{C}^{m|n}$. The natural action of $\mathfrak{gl}_d \oplus \mathfrak{gl}_{m|n}$ on $\mathbb{C}^d\otimes \mathbb{C}^{m|n}$ extends to $S(\mathbb{C}^d\otimes \mathbb{C}^{m|n})$. We will recall the $(\mathfrak{gl}_d, \mathfrak{gl}_{m|n})$-Howe duality, which yields a multiplicity-free decomposition of $S(\mathbb{C}^d\otimes \mathbb{C}^{m|n})$ into $\mathfrak{gl}_d \oplus \mathfrak{gl}_{m|n}$-simple modules. In this way, we realise a class of  simple unitary $\mathfrak{gl}_{m|n}$-modules within $S(\mathbb{C}^d\otimes \mathbb{C}^{m|n})$ whose weights are integral.

Before proceeding, we recall some combinatorial definitions from \cite{BR87,GW09,CW12}. A finite non-increasing sequence $\lambda=(\lambda_1, \dots, \lambda_d)$ of non-negative integers is called a \emph{partition} of length $d$. Denote by $\mathcal{P}_d$ the set of all partitions of length $d$. A \emph{Young diagram} associated with $\lambda$ is a collection of left-justified rows of boxes, where the $i$-th row contains $\lambda_i$ boxes.  The conjugate partition of $\lambda$ is $\lambda^c=(\lambda^c_1, \dots, \lambda^c_{\ell})$, where $\ell=\lambda_1$ and $\lambda^c_i=\# \{j \mid \lambda_j\geq i \}$ for $1\leq i\leq \ell$. 

An \emph{$(m,n)$-hook partition} is a partition $\lambda=(\lambda_1, \dots, \lambda_{d})$ for which $\lambda_{m+1}\leq n$, where we set $\lambda_{m+1}=0$ if $m\geq d$. Associated to such $\lambda$, we define
\begin{equation}\label{eq: lanat}
  \lambda^{\natural}:=(\lambda_1, \dots, \lambda_m, \langle \lambda^c_1-m\rangle, \dots, \langle \lambda^c_n-m\rangle),
\end{equation}
where $\langle \lambda^c_i-m\rangle:={\rm max}\{\lambda^c_i-m, 0\}$ for $1\leq i\leq n$.
Let $\mathcal{P}_{m|n}$ denote the set of all $(m,n)$-hook partitions. We have the following Howe duality \cite{How89, Ser01}.

\begin{theorem}[$(\mathfrak{gl}_d, \mathfrak{gl}_{m|n})$-Howe duality]\label{thm: Howe}
  As a $\mathfrak{gl}_d\oplus\mathfrak{gl}_{m|n}$-module,  $S(\mathbb{C}^d\otimes \mathbb{C}^{m|n})$ has the following multiplicity-free decomposition
  \[ S(\mathbb{C}^d\otimes \mathbb{C}^{m|n})\cong \bigoplus_{\lambda \in \mathcal{P}_d \cap \mathcal{P}_{m|n}} L^d(\lambda) \otimes L^{m|n}(\lambda^{\natural}),  \]
  where $L^{d}(\lambda)$ and $L^{m|n}(\lambda^{\natural})$ are simple $\mathfrak{gl}_d$- and $\mathfrak{gl}_{m|n}$-modules with highest weights $\lambda$ and $\lambda^{\natural}$, respectively.
\end{theorem}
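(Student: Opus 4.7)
The plan is to reduce the statement to two classical Howe dualities via the purely even/odd splitting of $\mathbb{C}^d\otimes \mathbb{C}^{m|n}$. Since $\mathbb{C}^d$ is purely even, we have $\mathbb{C}^d\otimes \mathbb{C}^{m|n}=(\mathbb{C}^d\otimes \mathbb{C}^m)\oplus (\mathbb{C}^d\otimes \mathbb{C}^n)$ as super vector spaces, with the first summand even and the second odd. This gives a $\mathfrak{gl}_d\oplus \mathfrak{g}_0$-equivariant super algebra isomorphism
\[ S(\mathbb{C}^d\otimes \mathbb{C}^{m|n})\cong S(\mathbb{C}^d\otimes \mathbb{C}^m)\otimes \bigw(\mathbb{C}^d\otimes \mathbb{C}^n),\]
where $\mathfrak{g}_0=\mathfrak{gl}_m\oplus \mathfrak{gl}_n$. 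The remaining content is then to promote the $\mathfrak{g}_0$-action on each isotypic piece to the full $\mathfrak{gl}_{m|n}$-action and to identify the highest weights.

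On each tensor factor I would invoke the appropriate classical Howe duality. The symmetric $(\mathfrak{gl}_d,\mathfrak{gl}_m)$-duality yields
\[S(\mathbb{C}^d\otimes \mathbb{C}^m)\cong \bigoplus_{\mu}L^d(\mu)\otimes L^m(\mu),\]
with $\mu$ running over partitions of length $\leq \min(d,m)$, and the skew $(\mathfrak{gl}_d,\mathfrak{gl}_n)$-duality yields
\[\bigw(\mathbb{C}^d\otimes \mathbb{C}^n)\cong \bigoplus_{\tau}L^d(\tau)\otimes L^n(\tau^c),\]
with $\tau$ a partition fitting in a $d\times n$ rectangle and $\tau^c$ its conjugate. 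Tensoring realises $S(\mathbb{C}^d\otimes\mathbb{C}^{m|n})$ as a completely reducible $\mathfrak{gl}_d\oplus\mathfrak{g}_0$-module, and a Littlewood--Richardson decomposition of $L^d(\mu)\otimes L^d(\tau)$ produces the $\mathfrak{gl}_d$-labels $\lambda\in \mathcal{P}_d$ that should govern the final decomposition.

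To extend the $\mathfrak{g}_0$-structure on the inner isotypic pieces to a $\mathfrak{gl}_{m|n}$-structure, I would directly locate the joint $\mathfrak{gl}_d\oplus \mathfrak{gl}_{m|n}$-highest weight vectors in $S(\mathbb{C}^d\otimes \mathbb{C}^{m|n})$: these are the $\mathfrak{gl}_d\oplus\mathfrak{g}_0$-highest weight vectors that are in addition annihilated by the positive odd root vectors spanning $\mathfrak{g}_1$. An explicit calculation in the polynomial super algebra model shows that such vectors are parametrised, without multiplicity, by partitions $\lambda\in \mathcal{P}_d\cap \mathcal{P}_{m|n}$, with $\mathfrak{gl}_d$-weight $\lambda$ and $\mathfrak{gl}_{m|n}$-weight exactly the $\lambda^\natural$ of \eqref{eq: lanat}. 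Each such vector generates a Kac module whose simple quotient $L^{m|n}(\lambda^\natural)$ sits in $S(\mathbb{C}^d\otimes \mathbb{C}^{m|n})$ as a summand; exhaustion and multiplicity-freeness then follow from a bigraded character comparison using the Berele--Regev hook Schur function identity
\[\prod_{i,a}(1-x_iy_a)^{-1}\prod_{i,b}(1+x_iz_b)=\sum_{\lambda\in \mathcal{P}_d\cap \mathcal{P}_{m|n}}s_\lambda(x)\,hs_\lambda(y|z),\]
whose left-hand side is visibly the bigraded character of $S(\mathbb{C}^d\otimes \mathbb{C}^{m|n})$. The main obstacle is the bookkeeping in this last step: verifying that the $\mathfrak{g}_1$-invariance constraint cuts out precisely the $(m,n)$-hook partitions with the correct weight $\lambda^\natural$, and that $hs_\lambda(y|z)$ indeed coincides with the character of $L^{m|n}(\lambda^\natural)$ (the Sergeev super Schur identity).
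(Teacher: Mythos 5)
First, note that the paper does not prove this theorem at all: it is recalled from the literature with the citations \cite{How89, Ser01} (see also \cite{CW12}), so there is no internal proof to compare against. Your sketch follows the standard literature route (explicit joint highest weight vectors plus the Berele--Regev/Sergeev hook Schur character identity), and that route does work, but as written it has one genuine gap. The crucial unjustified step is the claim that each joint $\mathfrak{gl}_d\oplus\mathfrak{g}_0$-highest weight vector annihilated by $\mathfrak{g}_1$ ``generates a Kac module whose simple quotient $L^{m|n}(\lambda^{\natural})$ sits in $S(\mathbb{C}^d\otimes\mathbb{C}^{m|n})$ as a summand.'' A highest weight vector only generates a highest weight module (a quotient of the Verma/Kac module), and in the non-semisimple category of finite-dimensional $\mathfrak{gl}_{m|n}$-modules neither simplicity of that cyclic module nor its being a direct summand is automatic; likewise, a bigraded character identity by itself does not upgrade to a direct sum decomposition without complete reducibility. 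The standard repair is exactly the positive-definite contravariant Hermitian form on $\mathbb{C}[\mathbf{x}]$ (the paper's \thmref{thm: polyunitary}, which is logically independent of the Howe duality): unitarity forces every submodule to be a direct summand and every cyclic highest weight submodule to be simple, after which your character comparison does give both exhaustion and multiplicity-freeness. Alternatively one can argue via the double commutant property of the images of ${\rm U}(\mathfrak{gl}_d)$ and ${\rm U}(\mathfrak{gl}_{m|n})$ in the oscillator algebra, as in Howe and Sergeev, but some such input must be named.

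Two smaller points. The first half of your argument (classical symmetric and skew Howe dualities for $\mathfrak{gl}_d\oplus(\mathfrak{gl}_m\oplus\mathfrak{gl}_n)$ plus a Littlewood--Richardson analysis) is not actually used by your final argument, which locates the joint highest weight vectors directly and compares characters; it can be dropped. Second, the identification of the hook Schur function $hs_\lambda$ with the character of $L^{m|n}(\lambda^{\natural})$ is a substantial external theorem (Sergeev, Berele--Regev) essentially equivalent to the $\mathfrak{gl}_{m|n}$-side of the statement being proved; citing it is legitimate, but you should be explicit that it is an input rather than ``bookkeeping.''
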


We proceed next to recall that $S(\mathbb{C}^d\otimes \mathbb{C}^{m|n})$ is a unitary $\mathfrak{gl}_d\oplus\mathfrak{gl}_{m|n}$-module, as detailed in  \cite{CLZ04,CW01}. 
Let $f_a, 1\leq a\leq d,$ be the standard basis of $\mathbb{C}^{d}$ with $[f_a]=\bar{0}$ for all $a$.  Then $S(\mathbb{C}^d\otimes \mathbb{C}^{m|n})$ is isomorphic to the polynomial superalgebra $\mathbb{C}[\mathbf{x}]$ in variables $x_{ai}:= f_a\otimes e_i, 1\leq a\leq d, 1\leq i\leq m+n$.
Denote by  $\partial_{ai}:= \partial/\partial_{x_{ai}} $ the partial derivatives,  and  note that $[\partial_{ai}]=[x_{ai}]=[f_a]+[e_i]$ for all $a,i$.  
 
For $1\leq a, b \leq d$,  denote by $e_{ab}$ the matrix units of $\mathfrak{gl}_d$ relative to the standard basis $f_a, 1\leq a\leq d,$ such that $e_{ab}f_c= \delta_{bc} f_a$ for all $1\leq a,b,c\leq d$.
 The linear representation $\rho: {\rm U}(\mathfrak{gl}_d\oplus \mathfrak{gl}_{m|n}) \rightarrow {\rm End}_{\mathbb{C}}(\mathbb{C}[\mathbf{x}])$ is defined by
\begin{align*}
  \rho(e_{ab})= \sum_{i=1}^{m+n} x_{ai} \partial_{bi}, \quad 
  \rho(E_{ij})= \sum_{a=1}^d x_{ai}\partial_{aj},  
\end{align*} 
for  $1\leq a,b\leq d$ and $1\leq i,j \leq m+n$.  
It is straightforward to verify that the differential operators $\rho(e_{ab})$ (resp. $\rho(E_{ij})$) satisfy the commutation relations of $\mathfrak{gl}_d$ (resp. $\mathfrak{gl}_{m|n}$), and $\rho(e_{ab})$ commutes with $\rho(E_{ij})$ for all   $a,b,i,j$. 
Moreover, let $D[\mathbf{x}]$ denote the oscillator superalgebra generated by $x_{ai}$ and $\partial_{ai}$ for $1\leq a\leq d$ and $1\leq i\leq m+n$. Then $\rho$ extends to an associative superalgebra homomorphism
$\rho: {\rm U}(\mathfrak{gl}_d\oplus \mathfrak{gl}_{m|n}) \rightarrow D[\mathbf{x}].$  
Note that $\mathbb{C}[\mathbf{x}]$ is a simple $D[\mathbf{x}]$-module.

We now define a Hermitian form on $\mathbb{C}[\mathbf{x}]$ induced from  the $\ast$-structure on $D[\mathbf{x}]$, thus making $\mathbb{C}[\mathbf{x}]$ into a unitary $\mathfrak{gl}_d\oplus \mathfrak{gl}_{m|n}$-module. The oscillator superalgebra $D[\mathbf{x}]$ is a $\ast$-superalgebra with an anti-linear anti-involution $\psi$ defined by 
\[ \psi(x_{ai})= \partial_{ai}, \quad \psi(\partial_{ai})= x_{ai}, \quad  1\leq a\leq d, 1\leq i\leq m+n.   \]
This induces a unique Hermitian form $\langle -, -\rangle$ on $\mathbb{C}[\mathbf{x}]$ such that $\langle 1, 1\rangle=1$, and 
\[ \langle fg,h \rangle= \langle g, \psi(f) h \rangle, \quad f,g,h \in \mathbb{C}[\mathbf{x}]. \]
By definition, $\langle f,f \rangle>0$ for any monomial $f\in \mathbb{C}[\mathbf{x}]$ and hence $\langle -, -\rangle$ is positive definite. 

Similarly, ${\rm U}(\mathfrak{gl}_d\oplus \mathfrak{gl}_{m|n})$ is a $\ast$-superalgebra with an anti-linear anti-involution $\phi$ defined by 
\[ \phi(e_{ab})=e_{ba}, \quad \phi(E_{ij})=E_{ji}, \quad 1\leq a,b \leq d, 1\leq i,j\leq m+n. \]
It is straightforward to check that $\rho \phi (X)= \psi \rho(X)$ for all $X\in \mathfrak{gl}_d\oplus \mathfrak{gl}_{m|n}$, hence these two $*$-structures are compatible. We obtain the following \cite{CLZ04,CW01}. 

\begin{theorem}\label{thm: polyunitary}
  The polynomial superalgebra $\mathbb{C}[\mathbf{x}]\cong S(\mathbb{C}^d\otimes \mathbb{C}^{m|n})$ is a unitary ${\rm U}(\mathfrak{gl}_d\oplus \mathfrak{gl}_{m|n})$-module. In particular, every simple ${\rm U}(\mathfrak{gl}_d\oplus \mathfrak{gl}_{m|n})$-submodule of $\mathbb{C}[\mathbf{x}]$ is unitary. 
\end{theorem}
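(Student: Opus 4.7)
The strategy is to assemble three ingredients already set up in the excerpt: the Hermitian form $\langle -, -\rangle$ on $\mathbb{C}[\mathbf{x}]$ is positive definite; $\psi$ is an anti-linear anti-involution on $D[\mathbf{x}]$; and $\rho\phi = \psi\rho$ on $\mathfrak{gl}_d \oplus \mathfrak{gl}_{m|n}$. The only substantive check is that the form is contravariant for the entire $\ast$-algebra $(D[\mathbf{x}],\psi)$, not merely for the multiplication operators appearing in its defining relation.

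First I would upgrade the defining property $\langle fg, h\rangle = \langle g, \psi(f) h\rangle$ (for $f, g, h \in \mathbb{C}[\mathbf{x}]$) to the statement
\[
\langle T v, w\rangle = \langle v, \psi(T) w\rangle \quad \text{for every } T \in D[\mathbf{x}].
\]
The case $T = x_{ai}$ is immediate from the defining relation with $f = x_{ai}$. For $T = \partial_{ai}$, taking the complex conjugate of $\langle x_{ai} u, v\rangle = \langle u, \partial_{ai} v\rangle$ and invoking the Hermitian symmetry of $\langle -, -\rangle$ yields $\langle \partial_{ai} v, u\rangle = \langle v, x_{ai} u\rangle = \langle v, \psi(\partial_{ai}) u\rangle$. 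Since the $x_{ai}$ and $\partial_{ai}$ generate $D[\mathbf{x}]$ and the contravariance property propagates through products,
\[
\langle A B v, w\rangle = \langle B v, \psi(A) w\rangle = \langle v, \psi(B)\psi(A) w\rangle = \langle v, \psi(A B) w\rangle,
\]
induction on word length yields the statement for all of $D[\mathbf{x}]$.

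Applying this to $T = \rho(X)$ with $X \in \mathfrak{gl}_d \oplus \mathfrak{gl}_{m|n}$ and invoking $\rho\phi = \psi\rho$ gives
\[
\langle \rho(X) v, w\rangle = \langle v, \psi(\rho(X)) w\rangle = \langle v, \rho(\phi(X)) w\rangle,
\]
which is contravariance of $\langle -, -\rangle$ with respect to $\phi$ for the $\rho$-action of the Lie generators. Since $\phi$ is an anti-involution and $\rho$ an algebra homomorphism, the identity $\psi\rho = \rho\phi$ extends to all of ${\rm U}(\mathfrak{gl}_d \oplus \mathfrak{gl}_{m|n})$ (both sides are anti-multiplicative in the ${\rm U}$-argument once the identity is known on generators), so the contravariance of $\langle -, -\rangle$ extends as well. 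Combined with positive definiteness this yields the first assertion. For the second, any simple ${\rm U}(\mathfrak{gl}_d \oplus \mathfrak{gl}_{m|n})$-submodule $V \subseteq \mathbb{C}[\mathbf{x}]$ inherits the restriction $\langle -, -\rangle|_V$, which remains positive definite and contravariant, so $V$ is itself unitary.

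No serious obstacle is anticipated: the theorem is essentially a packaging of compatibilities already recorded in the excerpt, and the only point that needs a little care is extending contravariance from the defining relation (which involves only multiplication operators) to the full oscillator superalgebra $D[\mathbf{x}]$.
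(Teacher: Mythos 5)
Your proposal is correct and follows essentially the same route as the paper, which (citing \cite{CLZ04,CW01}) builds the form from the $\ast$-structure $\psi$ on $D[\mathbf{x}]$, notes positive definiteness, and uses the compatibility $\rho\phi=\psi\rho$ on generators to get contravariance for ${\rm U}(\mathfrak{gl}_d\oplus\mathfrak{gl}_{m|n})$. You merely fill in the routine details (extending contravariance from multiplication operators to all of $D[\mathbf{x}]$ via Hermitian symmetry and anti-multiplicativity), which is exactly the intended argument.
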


It follows from  \thmref{thm: Howe} and \thmref{thm: polyunitary} that every finite dimensional simple $\mathfrak{gl}_{m|n}$-module $L^{m|n}(\lambda^{\natural})$ associated to an $(m,n)$-hook partition $\lambda$ is unitary. We refer to $L^{m|n}(\lambda^{\natural})$ as  a \emph{polynomial module}, whose highest weight $\lambda^{\natural}$ is integral.

\subsection{Branching rules and Pieri's rule for \texorpdfstring{$\mathfrak{gl}_m$}{glm}}
We recall the branching rules and Pieri's rule  for  $\mathfrak{gl}_m= \mathfrak{gl}(\mathbb{C}^m)$ and  refer to \cite{GW09} for more details. Let $P^{m}_+$ denote the set of dominant integral weights of $\mathfrak{gl}_m$. 

For any  $\Lambda=(\lambda_1, \dots, \lambda_{m})\in P^{m}_+$  and $\Lambda'=(\lambda'_1, \dots, \lambda'_{m-1})\in P^{m-1}_+$,  we say that $\Lambda'$ interlaces $\Lambda$, and  write $\Lambda\downarrow \Lambda'$ if 
\begin{equation}\label{eq: ladown}
	\lambda_1\geq \lambda'_1\geq \lambda _2 \geq \lambda'_2\geq \cdots \geq \lambda'_{m-1}\geq \lambda_{m},  \quad \lambda_i-\lambda'_i\in \mathbb{Z}_+ \text{\ for\ } 1\leq i\leq m-1. 
\end{equation}

\begin{theorem}[Branching rules]\label{thm: glbr}
 Let $\Lambda\in P_+^{m}$. 
 The finite-dimensional  simple $\mathfrak{gl}_{m}$-module $L^{m}(\Lambda)$ decomposes into a multiplicity-free direct sum of simple  $\mathfrak{gl}_{m-1}$-modules
 \[L^{m}(\Lambda)\cong \bigoplus_{\Lambda\downarrow \Lambda'} L^{m-1}(\Lambda').  \]
\end{theorem}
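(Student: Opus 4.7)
The plan is to deduce the branching rule from the character formalism of $\mathfrak{gl}_m$ together with a standard combinatorial identity for Schur polynomials in terms of semistandard Young tableaux (SSYT).

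First, I would reduce to the case in which $\Lambda$ is a partition. Tensoring $L^m(\Lambda)$ with a sufficiently high power $\det^k$ of the one-dimensional determinant representation produces $L^m(\Lambda + (k,\dots,k))$, and the same shift $(k,\dots,k)_{m-1}$ is applied to every $\Lambda'$ appearing as an interlacing of $\Lambda$ (since the inequalities in \eqref{eq: ladown} are preserved under a uniform shift). Hence it suffices to prove the theorem when $\Lambda$ has non-negative entries.

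Second, since $L^m(\Lambda)$ is finite-dimensional its restriction to $\mathfrak{gl}_{m-1}$ is completely reducible by Weyl's theorem, so the decomposition is determined entirely by the character. By Weyl's character formula one has $\mathrm{ch}\, L^m(\Lambda) = s_\Lambda(x_1, \dots, x_m)$, the Schur polynomial in the variables $x_i = e^{\epsilon_i}$, and restricting to the Cartan of $\mathfrak{gl}_{m-1}$ amounts to setting $x_m = 1$. Thus it is enough to establish the identity
\[ s_\Lambda(x_1, \dots, x_{m-1}, 1) = \sum_{\Lambda \downarrow \Lambda'} s_{\Lambda'}(x_1, \dots, x_{m-1}). \]

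Third, I would prove this identity combinatorially via the tableau expansion $s_\Lambda(x_1, \dots, x_m) = \sum_T x^T$, where $T$ ranges over SSYT of shape $\Lambda$ with entries in $\{1, \dots, m\}$. Setting $x_m = 1$ weights each tableau only by its sub-alphabet $\{1, \dots, m-1\}$. The cells of $T$ labelled $m$ necessarily form a horizontal strip at the bottom of $\Lambda$; removing them produces a SSYT of some shape $\Lambda'$ on the alphabet $\{1, \dots, m-1\}$. The skew shape $\Lambda/\Lambda'$ being a horizontal strip is exactly the interlacing condition \eqref{eq: ladown}, and this procedure is a bijection. Summing yields the required identity with multiplicity one for each $\Lambda'$, giving the multiplicity-free decomposition.

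The only step requiring genuine care is the bijection in the third paragraph, but this is a classical and well-documented argument; multiplicity-freeness is also reflected in the fact that $(\mathfrak{gl}_m, \mathfrak{gl}_{m-1})$ forms a strong Gelfand pair, so no honest obstacle arises.
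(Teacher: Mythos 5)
Your proposal is correct, but note that the paper does not prove this statement at all: Theorem~\ref{thm: glbr} is recalled as a classical result with a citation to \cite{GW09}, so there is no in-paper argument to compare against. Your route — twist by a power of the determinant to reduce to partition weights, invoke complete reducibility of the restriction so the character determines the decomposition, and then prove the specialization identity $s_\Lambda(x_1,\dots,x_{m-1},1)=\sum_{\Lambda\downarrow\Lambda'} s_{\Lambda'}(x_1,\dots,x_{m-1})$ by peeling off the horizontal strip of cells labelled $m$ in a semistandard tableau — is the standard combinatorial proof and is complete, including the multiplicity-one statement, since each $\Lambda'$ with $\Lambda/\Lambda'$ a horizontal strip (equivalently, the interlacing \eqref{eq: ladown}) is reached by exactly one strip removal and the Schur polynomials $s_{\Lambda'}$ in $m-1$ variables are linearly independent. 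One small point to tighten: Weyl's theorem applies to the semisimple part $\mathfrak{sl}_{m-1}$, so you should add that the centre of $\mathfrak{gl}_{m-1}$ acts semisimply because it lies in the Cartan subalgebra of $\mathfrak{gl}_m$, whence the restriction is indeed completely reducible; this is a one-line remark, not a gap. An alternative closer to the sources the paper leans on (\cite{GW09}, and Lemma~A of \cite{Gou89} used for Theorem~\ref{thm: Pieri}) would be to argue via highest-weight vectors for $\mathfrak{gl}_{m-1}$ inside $L^m(\Lambda)$ or via the Gelfand--Tsetlin construction, but your character-theoretic argument is shorter and entirely adequate.
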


For any $\Lambda=(\lambda_1, \dots, \lambda_{m})$ and $\Lambda'=(\lambda'_1, \dots, \lambda'_{m})\in P^m_{+}$, we shall write $\Lambda \rightarrow \Lambda'$ if 
\begin{equation}\label{eq: lasim}
	\lambda_i-\lambda'_i\in \{0,1\}, \quad 1\leq i\leq m,
\end{equation}
and define the difference $|\Lambda-\Lambda'|:= \sum_{i=1}^m (\lambda_i-\lambda'_i)$.

\begin{theorem}[Pieri's rule]\label{thm: Pieri}
  Let $\Lambda\in P_+^{m}$. For each $k=0, \dots, m$, let  $\bigw^{k}(\mathbb{C}^m)$ and $\bigw^{k}((\mathbb{C}^m)^*)$ be the $k$-th exterior powers of the  natural $\mathfrak{gl}_m$-module $\mathbb{C}^m$ and its dual $(\mathbb{C}^m)^*$, respectively.  
  \begin{enumerate}
  \item As a $\mathfrak{gl}_m$-module, $\bigw^{k}(\mathbb{C}^m)\otimes L^m(\Lambda)\cong \bigoplus_{\Lambda' \rightarrow \Lambda, |\Lambda'-\Lambda|=k} L^{m}(\Lambda'). $
  \item As a $\mathfrak{gl}_m$-module, $\bigw^{k}((\mathbb{C}^m)^*)\otimes L^m(\Lambda)\cong \bigoplus_{\Lambda \rightarrow \Lambda',  |\Lambda-\Lambda'|=k} L^{m}(\Lambda'). $
  \end{enumerate}
\end{theorem}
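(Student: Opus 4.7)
The plan is to reduce both parts to the classical Pieri identity for Schur polynomials via the Weyl character formula, and then to obtain (2) from (1) by duality.

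First I would identify the relevant characters on the diagonal torus of $\mathfrak{gl}_m$: by the Weyl character formula, $\operatorname{ch} L^m(\Lambda) = s_\Lambda(x_1,\ldots,x_m)$ (extended from partitions to arbitrary dominant integral weights via the determinant shift $L^m(\Lambda+(c,\ldots,c)) \cong L^m(\Lambda) \otimes (\bigw^m \mathbb{C}^m)^{\otimes c}$), while $\operatorname{ch} \bigw^k \mathbb{C}^m = e_k(x_1,\ldots,x_m)$. After reducing to the case $\Lambda \in \mathcal{P}_m$ through a uniform shift $\Lambda \mapsto \Lambda + (c,\ldots,c)$ with $c$ large enough, part (1) becomes the classical Pieri rule
\[
e_k \cdot s_\Lambda = \sum_{\Lambda'} s_{\Lambda'},
\]
where the sum is over partitions $\Lambda'$ such that $\Lambda'/\Lambda$ is a vertical strip of size $k$—which is precisely the condition $\Lambda' \to \Lambda$ with $|\Lambda'-\Lambda|=k$. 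Since the category of finite-dimensional $\mathfrak{gl}_m$-modules is semisimple and characters separate isomorphism classes, this determines the decomposition.

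For part (2), I would use the canonical isomorphism of $\mathfrak{gl}_m$-modules
\[
\bigw^k((\mathbb{C}^m)^*) \;\cong\; \bigw^{m-k}(\mathbb{C}^m) \otimes \bigl(\bigw^m \mathbb{C}^m\bigr)^*
\]
coming from the perfect wedge-product pairing on the exterior algebra. Tensoring with $L^m(\Lambda)$ and absorbing the one-dimensional determinant twist into the highest weight gives
\[
\bigw^k((\mathbb{C}^m)^*) \otimes L^m(\Lambda) \;\cong\; \bigw^{m-k}(\mathbb{C}^m) \otimes L^m(\Lambda-(1,\ldots,1)),
\]
to which part (1) applies with parameter $m-k$. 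Translating the resulting condition $\Lambda'' \to \Lambda-(1,\ldots,1)$ with $|\Lambda''-(\Lambda-(1,\ldots,1))|=m-k$ back through the shift $\Lambda' := \Lambda''$ yields exactly $\Lambda \to \Lambda'$ and $|\Lambda - \Lambda'|=k$, matching the stated form.

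The main obstacle I anticipate is the bookkeeping around the determinant shift: one must verify that the interlacing condition $\Lambda'\to\Lambda$ and the difference $|\Lambda'-\Lambda|$ are invariant under the uniform shift $\Lambda\mapsto\Lambda+(c,\ldots,c)$, and that this shift commutes with tensoring by $\bigw^k \mathbb{C}^m$ or $\bigw^k ((\mathbb{C}^m)^*)$. These verifications are routine but require some care to avoid off-by-one errors, especially when passing between $k$ and $m-k$ in part (2). The underlying Pieri identity for Schur polynomials itself is classical and can simply be quoted from \cite{GW09}.
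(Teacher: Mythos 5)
Your argument is correct, but note that the paper does not actually prove this statement: it simply quotes it from \cite{GW09} and remarks that it also follows from \cite[Lemma A]{Gou89}. What you have written is essentially the standard proof behind that citation — identify $\operatorname{ch}\bigw^{k}(\mathbb{C}^m)$ with $e_k$, invoke the (dual) Pieri identity $e_k\, s_\Lambda=\sum_{\Lambda'} s_{\Lambda'}$ over vertical strips, and use semisimplicity plus linear independence of characters to lift the identity to modules; your derivation of part (2) from part (1) via the perfect pairing $\bigw^{k}(\mathbb{C}^m)\otimes\bigw^{m-k}(\mathbb{C}^m)\to\bigw^{m}(\mathbb{C}^m)$, i.e.\ $\bigw^{k}((\mathbb{C}^m)^*)\cong\bigw^{m-k}(\mathbb{C}^m)\otimes(\bigw^{m}\mathbb{C}^m)^*$, and the bookkeeping $m-k\leftrightarrow k$ are also right. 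One small caveat worth fixing: in this paper the rule is applied to weights whose entries need not be integers — e.g.\ in \propref{prop: Kacdec} and the unitary setting the relevant $\lambda$ comes from $\Lambda\in D_+^{m|n}$, where only the differences $\lambda_i-\lambda_{i+1}$ are required to lie in $\mathbb{Z}_+$ — so reducing to the partition case by tensoring with powers of $\bigw^{m}\mathbb{C}^m$ is not quite enough. The fix is trivial: shift instead by the one-dimensional $\mathfrak{gl}_m$-module of weight $c(1,\dots,1)$ for an arbitrary scalar $c$ (which exists at the Lie-algebra level), or argue directly with formal Weyl characters; the interlacing condition $\Lambda'\to\Lambda$ and the quantity $|\Lambda'-\Lambda|$ are invariant under any such uniform shift, so the statement for partitions implies the general one.
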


\thmref{thm: Pieri} is also a consequence of \cite[Lemma A]{Gou89}.

\section{Unitary branching rules}

\subsection{Main result}\label{sec: mainres}
Throughout this section, $m$ and $n$ are positive integers. Let $\Lambda=(\lambda_1,\dots, \lambda_m, \omega_1, \dots, \omega_{n}) \in P^{m|n}_+$   and $\Lambda'= (\lambda'_1,\dots, \lambda'_m, \omega'_1, \dots, \omega'_{n-1})\in  P^{m|n-1}_+$. We say that $\Lambda'$ interlaces $\Lambda$,  and  write $\Lambda \downarrow \Lambda'$ if the  following interlacing conditions hold:
\begin{enumerate}
\item [(C1)]  For all $1\leq i\leq m-1$, $\lambda_i-\lambda'_i\in \{0,1\}$. For $i=m$: 
\begin{itemize}
  \item If $(\Lambda+\rho, \epsilon_m-\delta_1) = 0$, then $\lambda_m = \lambda'_m$;
  \item Otherwise, $\lambda_m - \lambda'_m \in \{0,1\}$.
  \end{itemize}
\item  [(C2)] $ \omega_{\mu} \geq \omega'_{\mu}\geq  \omega_{\mu+1}$ and $\omega_{\mu}- \omega'_{\mu}\in \mathbb{Z}_+$ for all  $1\leq \mu \leq n-1$. 
\end{enumerate}
If $n=1$, only (C1) is required. Note  that in (C1),  the equality $\lambda_m = \lambda'_m$ implies that  $\lambda_j= \lambda'_j$ for any $j<m$ such that $(\Lambda, \epsilon_j-\epsilon_m)=0$, since $\Lambda'$ is a dominant integral weight.

\begin{theorem}\label{thm: main}
Let  $L^{m|n}(\Lambda)$ be a finite-dimensional unitary simple $\mathfrak{gl}_{m|n}$-module with highest weight $\Lambda\in D^{m|n}_+$.  Then  $L^{m|n}(\Lambda)$ decomposes into a multiplicity-free direct sum of unitary simple $\mathfrak{gl}_{m|n-1}$-modules: 
\[
L^{m|n}(\Lambda)\cong \bigoplus_{\Lambda \downarrow \Lambda'} L^{m|n-1}(\Lambda').
\]
\end{theorem}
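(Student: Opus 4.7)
The plan is to split the argument according to the dichotomy in \rmkref{rmk: typ}: every unitary $\Lambda\in D_+^{m|n}$ is either typical (case (1) of \thmref{thm: unitary}) or atypical (case (2)), and I would treat these by different means---the typical case via the Kac module realisation, and the atypical case via Howe duality combined with a one-dimensional twist.

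For the typical case, since $L(\Lambda)=K(\Lambda)=\bigw\mathfrak{g}_{-1}\otimes L_0(\Lambda)$ by \eqref{eq: isoK}, I would first decompose $\mathfrak{g}_{-1}$ as a $(\mathfrak{gl}_m\oplus\mathfrak{gl}_{n-1})$-module into the analogous subspace $\mathfrak{g}_{-1}'$ for $\mathfrak{gl}_{m|n-1}$ and a complement $V:=\bigoplus_{i=1}^m\mathbb{C}E_{m+n,i}\cong(\mathbb{C}^m)^*$ (on which $\mathfrak{gl}_{n-1}$ acts trivially), giving $\bigw\mathfrak{g}_{-1}\cong\bigw\mathfrak{g}_{-1}'\otimes\bigw V$. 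Applying \thmref{thm: glbr} to the $\mathfrak{gl}_n$-factor of $L_0(\Lambda)$ restricted to $\mathfrak{gl}_{n-1}$, and \thmref{thm: Pieri}(2) to $\bigw V\otimes L^m(\lambda)$, one obtains a $\mathfrak{g}_0'$-decomposition whose summands have the form $\bigw\mathfrak{g}_{-1}'\otimes L_0(\Lambda')$, matching the $\mathfrak{g}_0'$-character of $K^{m|n-1}(\Lambda')$ for weights $\Lambda'$ satisfying (C1)-(C2). A short check using \rmkref{rmk: typ} for $\mathfrak{gl}_{m|n-1}$ (the bounds $\lambda_m'\geq\lambda_m-1$ and $\omega_{n-1}'\geq\omega_n$ force $(\Lambda'+\rho',\epsilon_m-\delta_{n-1})\geq(\Lambda+\rho,\epsilon_m-\delta_n)>0$) shows each such $\Lambda'$ is itself typical unitary, so $K^{m|n-1}(\Lambda')=L^{m|n-1}(\Lambda')$ is simple and unitary. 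Semisimplicity of $L(\Lambda)|_{\mathfrak{gl}_{m|n-1}}$ (from unitarity) combined with this character matching promotes the $\mathfrak{g}_0'$-decomposition to the required $\mathfrak{gl}_{m|n-1}$-decomposition. The atypical subclause of (C1) is vacuous here since $(\Lambda+\rho,\epsilon_m-\delta_1)\geq(\Lambda+\rho,\epsilon_m-\delta_n)+(n-1)>0$ in the typical case.

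For the atypical case I would first settle polynomial unitary modules $L^{m|n}(\lambda^\natural)$ with $\lambda\in\mathcal{P}_d\cap\mathcal{P}_{m|n}$. The factorisation $\mathbb{C}^{m|n}=\mathbb{C}^{m|n-1}\oplus\mathbb{C}^{0|1}$ yields $S(\mathbb{C}^d\otimes\mathbb{C}^{m|n})\cong S(\mathbb{C}^d\otimes\mathbb{C}^{m|n-1})\otimes\bigw\mathbb{C}^d$ as $(\mathfrak{gl}_d\oplus\mathfrak{gl}_{m|n-1})$-modules. Applying \thmref{thm: Howe} to both sides, using \thmref{thm: Pieri}(1) on $L^d(\mu)\otimes\bigw\mathbb{C}^d$, and extracting $L^d(\lambda)$-isotypic components gives
\[
L^{m|n}(\lambda^\natural)\cong\bigoplus_{\mu\subset\lambda,\ \lambda/\mu\ \text{vertical strip},\ \mu\in\mathcal{P}_d\cap\mathcal{P}_{m|n-1}}L^{m|n-1}(\mu^\natural).
\]
To cover an arbitrary atypical unitary $\Lambda$, I would tensor with the one-dimensional character $-\lambda_m\bigl(\sum_i\epsilon_i-\sum_\nu\delta_\nu\bigr)$: the unitary atypicality conditions $\omega_{\mu_0}=\omega_n$ and $\lambda_m+\omega_{\mu_0}=\mu_0-1$ ensure the shifted weight has all non-negative integer entries and is of the form $\tilde\lambda^\natural$ for some $\tilde\lambda\in\mathcal{P}_d\cap\mathcal{P}_{m|n}$ with $d$ sufficiently large. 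Since such twists commute with restriction and shift every branching summand by the same character, the polynomial-case rule transports to all atypical unitary modules.

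The main obstacle is the combinatorial translation from ``$\lambda/\mu$ is a vertical strip and $\mu\in\mathcal{P}_d\cap\mathcal{P}_{m|n-1}$'' into (C1)-(C2) on $(\lambda^\natural,\mu^\natural)$, in both directions. The conditions $\lambda_i-\mu_i\in\{0,1\}$ for $i<m$ are immediate; the forced equality $\lambda_m=\mu_m$ whenever $(\lambda^\natural+\rho,\epsilon_m-\delta_1)=0$ corresponds to the case $\lambda_m=0$ and $\lambda^c_1\leq m-1$, in which any vertical strip $\lambda/\mu$ automatically has trivial $m$-th row. For (C2), the equivalence between a vertical strip on $\lambda$ and a horizontal strip on $\lambda^c$ gives $\lambda^c_\nu\geq\mu^c_\nu\geq\lambda^c_{\nu+1}$, and checking that the truncation $\langle\cdot-m\rangle$ preserves these inequalities yields (C2). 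The most delicate bookkeeping is the reverse implication: reconstructing hook partitions from the $\omega$-data of a pair satisfying (C1)-(C2) so as to exhibit a genuine vertical strip, possibly after adjusting $d$.
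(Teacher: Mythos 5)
Your overall strategy is the paper's: the typical half via the Kac-module realisation is exactly \propref{prop: Kacdec} and \propref{prop: tybra} (your finishing step by semisimplicity of the unitary restriction plus linear independence of characters is a legitimate variant of the paper's direct identification of the summands as Kac modules), and the Howe-duality branching for polynomial modules is \propref{prop: hookbra}. The bound $(\Lambda'+\rho_{m|n-1},\epsilon_m-\delta_{n-1})\geq(\Lambda+\rho_{m|n},\epsilon_m-\delta_n)>0$ and the vacuousness of the special clause of (C1) in the typical case are both correct.

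The gap is in the reduction of a general atypical unitary $\Lambda$ to the polynomial case. You twist by the character $-\lambda_m\bigl(\sum_i\epsilon_i-\sum_\nu\delta_\nu\bigr)$ and claim the shifted weight is of the form $\tilde\lambda^{\natural}$; this is false whenever the index $\mu$ in \thmref{thm: unitary}(2) satisfies $\mu>1$. The shifted weight is $(\lambda_1-\lambda_m,\dots,\lambda_{m-1}-\lambda_m,0,\omega_1+\lambda_m,\dots,\omega_n+\lambda_m)$: its $m$-th entry is $0$, while its last entry is $\omega_n+\lambda_m=\omega_\mu+\lambda_m=\mu-1>0$. But for any $(m,n)$-hook partition $\tilde\lambda$, $\tilde\lambda^{\natural}_{m+\nu}=\langle\tilde\lambda^c_\nu-m\rangle>0$ forces $\tilde\lambda^{\natural}_m=\tilde\lambda_m\geq\tilde\lambda_{m+1}\geq\nu\geq 1$, so a weight with $m$-th entry $0$ and a positive $\delta$-entry is never of the form $\tilde\lambda^{\natural}$, no matter how large $d$ is taken. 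Concretely, for $\mathfrak{gl}_{1|2}$ and the natural module $\Lambda=(1,0,0)$ (atypical unitary, $\mu=2$) your twist produces $(0,1,1)$, which is not a polynomial weight. The point you elide is that ``dominant with non-negative integer entries'' is not sufficient to lie in the image of $\lambda\mapsto\lambda^{\natural}$; the hook constraint tying the $\delta$-part to the $m$-th entry must be verified, and it is precisely what the unitarity relation $\lambda_m+\omega_\mu=\mu-1$ delivers for the twist by $+\omega_n\bigl(\sum_i\epsilon_i-\sum_\nu\delta_\nu\bigr)$ used in \lemref{lem: atypoly} (your twist coincides with that one only when $\mu=1$, where $\omega_n=-\lambda_m$). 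Replacing your twist by $\Lambda\mapsto\Lambda^{(\omega_n)}$ repairs the argument, since $(\Lambda+\rho,\epsilon_m-\delta_1)$ and the conditions (C1)--(C2) are unchanged under twisting by multiples of $\sum_i\epsilon_i-\sum_\nu\delta_\nu$; the vertical-strip-to-(C1)--(C2) dictionary that you defer as the ``main obstacle'' does still need to be written out (as in \rmkref{rmk: brawt} and the two cases in the proof of \propref{prop: atybra}), though your sketch of it is essentially right.
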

\begin{remark}\label{rmk: comp}
  Condition (C1) stipulates that $\lambda_m=\lambda'_m$ when $(\Lambda+\rho, \epsilon_m-\delta_1) = 0$ for any $\mathfrak{gl}_{m|n}$. However, the necessary conditions given in \cite[Theorem 9]{GIW14} specify this only for the case $\mathfrak{gl}_{m|1}$. Note that  if the highest weight $\Lambda$ is unitary, then by \cite[Proposition 4]{GZ90} $(\Lambda+\rho, \epsilon_m-\delta_1) = 0$ implies $(\Lambda, \delta_1-\delta_n)=0$. Hence this  distinction is particularly relevant for 1-dimensional $\mathfrak{gl}_{m|n}$-modules with weights of the form  $(s,\dots, s, -s, \dots, -s)$ (consisting of $m$ copies of $s\in \mathbb{R}$ and $n$ copies of $-s$), which are both atypical and unitary. Any such $1$-dimensional module restricts to a $1$-dimensional $\mathfrak{gl}_{m|n-1}$-module with the weight comprising $m$ copies of $s$ and $n-1$ copies of $-s$. 
\end{remark}

We split the proof of \thmref{thm: main} into the typical and atypical cases, which are explicitly addressed in \secref{sec: typical} and \secref{sec: atypical}, respectively.

\subsection{Proof for the typical case}\label{sec: typical}
Let $\mathfrak{g}'=\mathfrak{gl}_{m|n-1}$ denote  the subalgebra of $\mathfrak{g}=\mathfrak{gl}_{m|n}$ spanned by $E_{ij}$ for $1\leq i,j \leq m+n-1$. Recall from \eqref{eq: Zgrad} the $\mathbb{Z}$-grading of $\mathfrak{g}$, and similarly, let $\mathfrak{g}'= \mathfrak{g}'_{-1} \oplus \mathfrak{g}'_{0} \oplus \mathfrak{g}'_1$ be the  $\mathbb{Z}$-grading of $\mathfrak{g}'$. The vector space  $\mathfrak{g}_{-1}$ (resp. $\mathfrak{g}'_{-1}$) is linearly spanned by $E_{ji}$ for $1\leq i\leq m<j\leq m+n$  (resp. $1\leq i\leq m<j\leq m+n-1$).  

Note that $\mathfrak{g}_{-1}\cong (\mathbb{C}^m)^*\otimes \mathbb{C}^{n}$ as a $\mathfrak{g}_0$-module, while $\mathfrak{g}'_{-1}\cong (\mathbb{C}^m)^*\otimes \mathbb{C}^{n-1}$ as a $\mathfrak{g}'_0$-module. Restricting to $\mathfrak{g}'_0=\mathfrak{gl}_m \oplus \mathfrak{gl}_{n-1}$, we have the decomposition:
 \begin{equation}\label{eq: g1dec}
  \mathfrak{g}_{-1}|_{\mathfrak{g}'_0}= \mathfrak{g}'_{-1} \oplus \mathfrak{m}'_{-1},
 \end{equation}
where $\mathfrak{m}'_{-1}\cong (\mathbb{C}^m)^*$ as a $\mathfrak{gl}_m$-module with trivial $\mathfrak{gl}_{n-1}$-action  and is  linearly spanned by $E_{m+n,i}$ for $1\leq i \leq m$.

We begin with the branching rule for  Kac modules; see also \cite[Appendix A]{GIW14} for a character-theoretic proof. 
\begin{proposition}\label{prop: Kacdec}
Let $\Lambda\in P^{m|n}_+$ with $m\geq 1$ and $n>1$. The Kac module $K^{m|n}(\Lambda)$  of $\mathfrak{gl}_{m|n}$ decomposes into a multiplicity-free direct sum of Kac modules of $\mathfrak{gl}_{m|n-1}$:
\[ 
 	K^{m|n}(\Lambda)\cong \bigoplus_{\Lambda \downarrow \Lambda'}K^{m|n-1}(\Lambda').
 \]  
\end{proposition}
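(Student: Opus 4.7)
The plan is to exploit the PBW identification $K^{m|n}(\Lambda) = \bigw{(\mathfrak{g}_{-1})} \otimes L_0(\Lambda)$ from \eqref{eq: isoK} together with the $\mathfrak{g}'_0$-module splitting $\mathfrak{g}_{-1} = \mathfrak{g}'_{-1} \oplus \mathfrak{m}'_{-1}$ recorded in \eqref{eq: g1dec}. This yields
\[
K^{m|n}(\Lambda) \cong \bigw{(\mathfrak{g}'_{-1})} \otimes \bigl(\bigw{(\mathfrak{m}'_{-1})} \otimes L_0(\Lambda)\bigr)
\]
as $(\mathfrak{g}'_0 \oplus \mathfrak{g}'_{-1})$-modules, with $\mathfrak{g}'_{-1}$ acting by wedging on the first factor.

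Next I would decompose the ``new base'' $\bigw{(\mathfrak{m}'_{-1})} \otimes L_0(\Lambda)$ as a $\mathfrak{g}'_0$-module. Since $\mathfrak{m}'_{-1} \cong (\mathbb{C}^m)^*$ as a $\mathfrak{gl}_m$-module with trivial $\mathfrak{gl}_{n-1}$-action (because $[E_{jk}, E_{m+n, l}] = 0$ whenever $m+1 \leq j, k \leq m+n-1$), Pieri's rule (\thmref{thm: Pieri}) gives $\bigw^{b}{(\mathfrak{m}'_{-1})} \otimes L^m(\lambda) \cong \bigoplus_{\lambda \to \lambda',\, |\lambda - \lambda'| = b} L^m(\lambda')$, and the classical branching rule (\thmref{thm: glbr}) gives $L^n(\omega)|_{\mathfrak{gl}_{n-1}} \cong \bigoplus_{\omega \downarrow \omega'} L^{n-1}(\omega')$. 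Combining these,
\[
\bigw{(\mathfrak{m}'_{-1})} \otimes L_0(\Lambda) \cong \bigoplus_{\Lambda \downarrow \Lambda'} L_0(\Lambda'),
\]
so that $K^{m|n}(\Lambda) \cong \bigoplus_{\Lambda \downarrow \Lambda'} K^{m|n-1}(\Lambda')$ as $\mathfrak{g}'_{\leq 0}$-modules.

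The heart of the matter is upgrading this $\mathfrak{g}'_{\leq 0}$-isomorphism to a genuine $\mathfrak{g}'$-isomorphism, since $\mathfrak{g}'_{+1}$ does not respect the tensor splitting: for $X = E_{ab} \in \mathfrak{g}'_{+1}$, the commutator $[E_{ab}, E_{m+n, c}] = \delta_{ac} E_{m+n, b}$ lies in $\mathfrak{gl}_n \setminus \mathfrak{g}'$, and when moved past the remaining factors to act on $L_0(\Lambda)$ it produces terms of strictly lower $\mathfrak{m}'_{-1}$-degree. To control this, I would introduce the ascending filtration $H^\bullet \subset K^{m|n}(\Lambda)$ by $\mathfrak{m}'_{-1}$-degree; the commutator analysis above confirms that each $H^b$ is a $\mathfrak{g}'$-submodule, and on each graded piece $H^b/H^{b-1}$ the degree-lowering part of the $\mathfrak{g}'_{+1}$-action vanishes. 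This identifies $H^b/H^{b-1} \cong \bigw^{b}{(\mathfrak{m}'_{-1})} \otimes K^{m|n-1}(\Lambda)$ as $\mathfrak{g}'$-modules (with trivial $\mathfrak{g}'_{\pm 1}$-action on the first tensor factor), which by the preceding paragraph decomposes as $\bigoplus_{\Lambda \downarrow \Lambda',\, |\lambda - \lambda'| = b} K^{m|n-1}(\Lambda')$. The most delicate step is then to show that this filtration splits; I would tackle it either by producing explicit $\mathfrak{g}'_{\geq 0}$-highest weight vectors of each prescribed weight $\Lambda'$ in $K^{m|n}(\Lambda)$ and checking by dimensions that the Kac submodules they generate are pairwise disjoint and exhaust the total, or by appealing to the projectivity of Kac modules in the parabolic category $\mathcal{O}^{\mathfrak{p}'}$ associated with $\mathfrak{p}' = \mathfrak{g}'_{\geq 0}$.
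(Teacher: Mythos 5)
Your opening steps coincide with the paper's own argument: the PBW factorisation coming from \eqref{eq: isoK} and \eqref{eq: g1dec}, then Pieri's rule (\thmref{thm: Pieri}) together with the classical branching rule (\thmref{thm: glbr}) applied to ${\rm U}(\mathfrak{m}'_{-1})\otimes L^m(\lambda)\otimes L^{n}(\omega)$. Your commutator $[E_{ab},E_{m+n,c}]=\delta_{ac}E_{m+n,b}$ is correct, and you rightly single out the $\mathfrak{g}'_1$-action as the point where the tensor splitting is not respected; the paper deals with this in one line, identifying $K^{m|n}(\Lambda)$ as the $\mathfrak{g}'$-module induced from ${\rm U}(\mathfrak{m}'_{-1})\otimes L_0(\Lambda)$ with $\mathfrak{g}'_1$ acting trivially, so no filtration appears there. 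Your route is more cautious, but it is not completed.

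The genuine gap is that you never prove the splitting of your filtration, which you yourself call the most delicate step. Identifying $H^b/H^{b-1}$ with $\bigoplus_{\Lambda\downarrow\Lambda',\,|\lambda-\lambda'|=b}K^{m|n-1}(\Lambda')$ only shows that $K^{m|n}(\Lambda)|_{\mathfrak{g}'}$ has a filtration by Kac modules, i.e.\ it recovers the character identity already available in \cite{GIW14}; filtrations by standard objects do not split automatically, and the splitting is exactly the module-theoretic content of \propref{prop: Kacdec}. Of your two proposed remedies, the second fails outright: Kac modules are the \emph{standard} objects of the relevant parabolic/highest-weight category, not projective ones; $K^{m|n-1}(\Lambda')$ is projective only when $\Lambda'$ is typical, and the weights $\Lambda'$ occurring here can all be atypical (for instance when $\Lambda$ itself is atypical), so an appeal to projectivity in $\mathcal{O}^{\mathfrak{p}'}$ is unavailable. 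The first remedy is a programme rather than an argument: for each $\Lambda'$ with $\Lambda\downarrow\Lambda'$ you would need to exhibit a vector of weight $\Lambda'$ annihilated by $\mathfrak{g}'_1$ and by the raising operators of $\mathfrak{g}'_0$, show that the ${\rm U}(\mathfrak{g}')$-module it generates is the full Kac module $K^{m|n-1}(\Lambda')$ (a priori it is only a quotient, so freeness over ${\rm U}(\mathfrak{g}'_{-1})$ must be checked), and then show the sum over all $\Lambda'$ is direct; none of this is carried out. A further, cosmetic, point: your description of $H^b/H^{b-1}$ as $\bigw^{b}{(\mathfrak{m}'_{-1})}\otimes K^{m|n-1}(\Lambda)$ with ``trivial $\mathfrak{g}'_{\pm1}$-action on the first factor'' does not define a $\mathfrak{g}'$-module; the graded piece should be described as the module induced from the $\mathfrak{p}'$-module $\bigw^{b}{(\mathfrak{m}'_{-1})}\otimes L_0(\Lambda)$ with trivial $\mathfrak{g}'_1$-action, after which your stated decomposition of it is correct.
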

\begin{proof}
Let  $\Lambda=(\lambda, \omega)\in \mathfrak{h}^*_{m|n}$ be $\mathfrak{g}_0$-dominant integral, where  $\lambda=(\lambda_1, \dots, \lambda_m)$ and $\omega=(\omega_1, \dots, \omega_{n})$. Let $L_0(\Lambda)\cong L^m(\lambda)\otimes L^n(\omega)$ be a simple $\mathfrak{g}_0$-module equipped  with a trivial $\mathfrak{g}_1$-action. Recall from \eqref{eq: isoK} that $K^{m|n}(\Lambda)={\rm U}(\mathfrak{g})\otimes_{{\rm U}(\mathfrak{g}_0\oplus \mathfrak{g}_1)} L_0(\Lambda)= {\rm U}(\mathfrak{g}_{-1})\otimes L_0(\Lambda)$ can be viewed as a tensor product of two $\mathfrak{g}_0$-modules. It follows from \eqref{eq: g1dec} that, upon restriction to $\mathfrak{g}'_0$, the Kac module $K^{m|n}(\Lambda)$ admits the following factorisation:
\begin{align*}
  K^{m|n}(\Lambda)|_{\mathfrak{g}'_0}& \cong {\rm U}(\mathfrak{g}'_{-1})\otimes \big({\rm U}(\mathfrak{m}'_{-1})\otimes L^m(\lambda) \otimes L^{n}(\omega)\big )|_{\mathfrak{g}'_0}\\
 & ={\rm U}(\mathfrak{g}')\otimes_{{\rm U}(\mathfrak{g}'_0\oplus \mathfrak{g}'_1)} \big( {\rm U}(\mathfrak{m}'_{-1})\otimes L^m(\lambda) \otimes L^{n}(\omega)\big),
\end{align*} 
 where in the last equality $\mathfrak{g}'_1$, as a subalgebra of $\mathfrak{g}_1$, acts trivially on both  ${\rm U}(\mathfrak{m}'_{-1})$ and $L^m(\lambda) \otimes L^{n}(\omega)$.

Consider the $\mathfrak{gl}_m$-module ${\rm U}(\mathfrak{m}'_{-1})\otimes L_{\lambda}^m$. Note that
 ${\rm U}(\mathfrak{m}'_{-1})$ is isomorphic to the exterior algebra  $\bigw((\mathbb{C}^m)^*)= \bigoplus_{k=0}^m \bigw^{k}((\mathbb{C}^m)^*)$ as a $\mathfrak{gl}_m$-module.  
For each $k=0,1,\dots, m$, using  Pieri's rule from \thmref{thm: Pieri},  we obtain
\[ \bigw^{k}( (\mathbb{C}^m)^*)\otimes  L^m(\lambda) \cong \bigoplus_{\lambda\rightarrow \lambda',\ |\lambda-\lambda'|=k} L^m(\lambda'),  \]
Hence as $\mathfrak{gl}_m$-modules, 			
\[{\rm U}(\mathfrak{m}'_{-1})\otimes L^m(\lambda) \cong \bigoplus_{\lambda\rightarrow\lambda'} L^m(\lambda'). \]

On the other hand, by \thmref{thm: glbr}, restricting to  $\mathfrak{gl}_{n-1}$, we have 
\[ L^{n}(\omega)|_{\mathfrak{gl}_{n-1}} \cong \bigoplus_{\omega \downarrow \omega'} L^{n-1}(\omega'), \]
where $\omega \downarrow \omega'$ is defined by \eqref{eq: ladown}. Therefore, we obtain
\[ 
K^{m|n}(\Lambda)\cong {\rm U}(\mathfrak{g}'_{-1}) \otimes \bigoplus_{\lambda \rightarrow \lambda',\  \omega\downarrow \omega'} L^m(\lambda') \otimes L^{n-1}(\omega') \cong \bigoplus_{\Lambda\downarrow \Lambda'} K^{m|n-1}(\Lambda'),  
 \]
where $K^{m|n-1}(\Lambda')= {\rm U}(\mathfrak{g}'_{-1})\otimes (L^m(\lambda')\otimes L^{n-1}(\omega'))={\rm U}(\mathfrak{g}')\otimes_{{\rm U}(\mathfrak{g}'_0\oplus \mathfrak{g}'_1)} (L^m(\lambda')\otimes L^{n-1}(\omega'))$ is the Kac module with highest weight $\Lambda'=(\lambda', \omega')$ for all $\lambda \rightarrow \lambda'$ and $\omega\downarrow \omega'$.
\end{proof}

\begin{remark}\label{rmk: sp}
For $\mathfrak{g}= \mathfrak{gl}_{m|1}$, the Kac module $K^{m|1}(\Lambda)$, where $\Lambda=(\lambda, \omega)\in P^{m|1}_+$, admits a multiplicity-free decomposition into simple $\mathfrak{gl}_m$-modules: 
\[ K^{m|1}(\Lambda)\cong \bigoplus_{\Lambda \downarrow \Lambda'} L^{m}(\Lambda'). \]
To see this, note that  $K^{m|1}(\Lambda)|_{\mathfrak{gl}_m}\cong {\rm U}(\mathfrak{\mathfrak{g}_{-1}})\otimes L^{m}(\lambda)\cong \bigw((\mathbb{C}^m)^*)\otimes L^m(\lambda)$ as $\mathfrak{gl}_m$-modules. The decomposition then follows from Pieri's rules.  
\end{remark}

Recall that the Kac module is simple if and only if it is typical \cite{Kac77b}. We apply \propref{prop: Kacdec}
to typical unitary Kac modules. 
\begin{proposition}\label{prop: tybra}
  Let  $L^{m|n}(\Lambda)$ be a typical unitary  $\mathfrak{gl}_{m|n}$-module with highest weight $\Lambda\in D^{m|n}_{+}$. Then $L^{m|n}(\Lambda)$ admits a multiplicity-free decomposition into typical unitary   $\mathfrak{gl}_{m|n-1}$-modules  :
\[ L^{m|n}(\Lambda) \cong \bigoplus_{\Lambda\downarrow \Lambda'} L^{m|n-1}(\Lambda'). \]
\end{proposition}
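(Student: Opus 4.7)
The plan is to combine two facts: (i) a typical Kac module is simple, and (ii) the general Kac-module branching rule \propref{prop: Kacdec}. Since $\Lambda$ is typical, Kac's theorem gives $L^{m|n}(\Lambda)=K^{m|n}(\Lambda)$, and because $L^{m|n}(\Lambda)$ is moreover unitary, it must satisfy case (1) of \thmref{thm: unitary}, i.e., $(\Lambda+\rho,\epsilon_m-\delta_n)>0$. By \rmkref{rmk: typ}, this forces $(\Lambda+\rho,\epsilon_i-\delta_\mu)>0$ for every odd positive root; in particular $(\Lambda+\rho,\epsilon_m-\delta_1)>0$, so the special clause of condition (C1) from \thmref{thm: main} is vacuous and the interlacing $\Lambda\downarrow\Lambda'$ coincides with the one appearing in \propref{prop: Kacdec}. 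Applying that proposition yields
\[
 L^{m|n}(\Lambda) \;=\; K^{m|n}(\Lambda) \;\cong\; \bigoplus_{\Lambda\downarrow\Lambda'} K^{m|n-1}(\Lambda').
\]

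The remaining task is to upgrade each $K^{m|n-1}(\Lambda')$ on the right-hand side to the simple unitary module $L^{m|n-1}(\Lambda')$, which reduces to showing that every $\Lambda'=(\lambda',\omega')$ with $\Lambda\downarrow\Lambda'$ is itself typical and unitary. Writing $\rho'$ for the Weyl vector of $\mathfrak{gl}_{m|n-1}$, a direct calculation gives $(\Lambda'+\rho',\epsilon_i-\delta_\mu)=\lambda'_i+\omega'_\mu+m-i-\mu+1$. The key estimate is
\[
 (\Lambda'+\rho',\epsilon_m-\delta_{n-1}) \;\geq\; (\lambda_m-1)+\omega_n-n+2 \;=\; (\Lambda+\rho,\epsilon_m-\delta_n) \;>\; 0,
\]
using $\lambda'_m\geq \lambda_m-1$ from (C1) and $\omega'_{n-1}\geq \omega_n$ from (C2). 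Thus $\Lambda'$ satisfies case (1) of \thmref{thm: unitary}, so it is unitary, and \rmkref{rmk: typ} applied to $\Lambda'$ then guarantees that $\Lambda'$ is typical as well. Consequently $K^{m|n-1}(\Lambda')=L^{m|n-1}(\Lambda')$, and the desired decomposition follows.

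The only subtle point — and the one I would flag as the main obstacle — is reconciling the interlacing conditions used in the two propositions and propagating typicality and case-(1) unitarity from $\Lambda$ down to each $\Lambda'$. Both reduce to short arithmetic once \rmkref{rmk: typ} is in hand, which is precisely why the typical case admits this short, purely combinatorial proof while the atypical case in \secref{sec: atypical} requires a substantially different argument via Howe duality.
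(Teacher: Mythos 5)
Your argument is essentially the paper's own proof: identify $L^{m|n}(\Lambda)$ with the Kac module, apply \propref{prop: Kacdec}, and then propagate case-(1) unitarity and typicality to each $\Lambda'$ via exactly the estimate $(\Lambda'+\rho_{m|n-1},\epsilon_m-\delta_{n-1})\geq(\Lambda+\rho,\epsilon_m-\delta_n)>0$ together with \rmkref{rmk: typ}; your observation that $(\Lambda+\rho,\epsilon_m-\delta_1)>0$ makes the special clause of (C1) vacuous is a correct (and welcome) clarification. The one omission is the case $n=1$: \propref{prop: Kacdec} is stated only for $n>1$ and the weight $\delta_{n-1}$ does not exist there, so that case needs the separate argument of \rmkref{rmk: sp} (the constituents are then simple $\mathfrak{gl}_m$-modules, automatically unitary), as in the paper.
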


\begin{proof}
Since $\Lambda$ is typical, we have $L^{m|n}(\Lambda)\cong K^{m|n}(\Lambda)$. If $n=1$, the decomposition follows directly from \rmkref{rmk: sp}. If $n>1$, we apply \propref{prop: Kacdec} to decompose $L^{m|n}(\Lambda)$ into Kac modules of $\mathfrak{gl}_{m|n-1}$. It remains to prove that each $\Lambda'$ appearing in the decomposition is typical and unitary, and therefore each $K^{m|n-1}(\Lambda')\cong L^{m|n-1}(\Lambda')$ is simple. 

Since $\Lambda\in D^{m|n}_+$ is typical and unitary, by \thmref{thm: unitary}, we have 
 \[ (\Lambda +\rho_{m|n}, \epsilon_m -\delta_{n})= \lambda_m +\omega_{n}-n+1>0. \] 
 As $\Lambda \downarrow \Lambda'$, we have 
\[ \lambda_m \geq \lambda'_m \geq \lambda_m-1, \quad \omega_{n-1} \geq \omega'_{n-1} \geq \omega_{n}.  \]
Hence we obtain 
\[ (\Lambda'+ \rho_{m|n-1}, \epsilon_m-\delta_{n-1})=\lambda'_m+\omega'_{n-1} -n+2\geq \lambda_m+\omega_{n}-n+1>0.   \]
This, combined with \thmref{thm: unitary} and \rmkref{rmk: typ}, implies that  $\Lambda'$ is typical and  unitary.
\end{proof}

\begin{remark}
  In general, if $\Lambda\in P^{m|n}_+$ is typical, each $\Lambda'$ appearing in the decomposition given by \propref{prop: Kacdec}  is not necessarily typical. Therefore,  these Kac modules $K^{m|n-1}(\Lambda')$  are not necessarily simple modules but rather indecomposable modules.  
\end{remark}

\subsection{Proof for the atypical case}\label{sec: atypical}
For any $s\in \mathbb{R}$, let $\mathbb{C}_s$ be the 1-dimensional $\mathfrak{gl}_{m|n}$-module with weight $(s, \dots, s, \allowbreak -s,  \dots, -s)$, where $s$ and $-s$ appear $m$ and $n$ times, respectively. Clearly, $\mathbb{C}_s$ is unitary. For any $\Lambda\in D^{m|n}_+$,   the tensor product $ L(\Lambda^{(s)}):=L(\Lambda)\otimes \mathbb{C}_{s}$ 
is a simple $\mathfrak{gl}_{m|n}$-module with  highest weight
\[ \Lambda^{(s)}:=(\lambda_1+s, \dots, \lambda_m+ s, \omega_1-s, \dots,  \omega_{n}-s).  \]

\begin{lemma}\label{lem: atypoly}
Let $\Lambda=(\lambda_1, \dots, \lambda_m, \omega_1, \dots,\omega_{n})\in D_+^{m|n}$ be an atypical unitary highest weight. Then $L(\Lambda^{(\omega_{n})})$ appears as a simple unitary $\mathfrak{gl}_{m|n}$-submodule of $S(\mathbb{C}^d\otimes \mathbb{C}^{m|n})$ for some positive integer $d$. 
\end{lemma}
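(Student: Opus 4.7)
The plan is to realise $L(\Lambda^{(\omega_n)})$ as a polynomial module $L^{m|n}(\tau^{\natural})$ for a suitable $(m,n)$-hook partition $\tau$; then \thmref{thm: Howe} will exhibit it as a summand of $S(\mathbb{C}^d\otimes\mathbb{C}^{m|n})$ and \thmref{thm: polyunitary} will automatically supply the unitary structure.

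The first step is to extract the arithmetic content of the atypical-unitary hypothesis. By \thmref{thm: unitary}(2), some $\mu\in\{1,\dots,n\}$ satisfies $(\Lambda+\rho,\epsilon_m-\delta_\mu)=0$ and $\omega_\mu=\omega_n$; expanding the first equality via the explicit formula for $\rho$ rewrites it as $\lambda_m+\omega_\mu+1-\mu=0$, which together with $\omega_\mu=\omega_n$ gives $\lambda_m+\omega_n=\mu-1$. Writing $\mu_0$ for the smallest index with $\omega_{\mu_0}=\omega_n$, one necessarily has $\mu_0\leq\mu\leq n$, so
\[
\mu_0-1\;\leq\;\lambda_m+\omega_n\;\leq\;n-1.
\]
In particular $\alpha_i:=\lambda_i+\omega_n\geq 0$ for all $i$.

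Next I would build $\tau$ explicitly. Set $\beta_j:=\omega_j-\omega_n$ for $1\leq j\leq n$, so that $\beta_n=0$ and $\Lambda^{(\omega_n)}=(\alpha_1,\dots,\alpha_m,\beta_1,\dots,\beta_{n-1},0)$. Let $(\gamma_1,\dots,\gamma_{\beta_1})$ be the conjugate of the partition $(\beta_1,\dots,\beta_{n-1})$, define $\tau:=(\alpha_1,\dots,\alpha_m,\gamma_1,\dots,\gamma_{\beta_1})$, and take $d$ to be its length. Checking that $\tau$ is a partition reduces to the single inequality $\alpha_m\geq\gamma_1$, which, since $\gamma_1=\#\{j\in\{1,\dots,n-1\}:\beta_j>0\}=\mu_0-1$, is precisely the lower bound obtained above. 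The hook condition $\tau_{m+1}=\gamma_1\leq n-1<n$ is then automatic, so $\tau\in\mathcal{P}_d\cap\mathcal{P}_{m|n}$. The identity $\tau^{\natural}=\Lambda^{(\omega_n)}$ comes down to bookkeeping with conjugate partitions via \eqref{eq: lanat}: the first $m$ coordinates match by construction; for $1\leq j\leq\mu_0-1$ the inequality $\alpha_m\geq\mu_0-1\geq j$ forces $\tau^c_j=m+\gamma^c_j=m+\beta_j$ by double conjugation; and for $j\geq\mu_0$ every $\gamma_k<j$ so $\tau^c_j\leq m$, whence $\langle\tau^c_j-m\rangle=0=\beta_j$.

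With $\tau$ in hand, \thmref{thm: Howe} embeds $L^{m|n}(\tau^{\natural})=L(\Lambda^{(\omega_n)})$ as a simple summand of $S(\mathbb{C}^d\otimes\mathbb{C}^{m|n})$, and \thmref{thm: polyunitary} ensures that this summand is unitary. The main obstacle is the well-definedness of the partition $\tau$, which is forced precisely by the sharp inequality $\lambda_m+\omega_n\geq\mu_0-1$ coming from the interplay of unitarity and atypicality; once this is secured, the rest of the argument is routine.
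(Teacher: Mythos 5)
Your proposal is correct and follows essentially the same route as the paper: both construct the same $(m,n)$-hook partition (first $m$ parts $\lambda_i+\omega_n$, tail the conjugate of $(\omega_1-\omega_n,\dots)$, with $d=m+\omega_1-\omega_n$), verify the partition and hook conditions from the atypicality relation $\lambda_m+\omega_n=\mu-1$, and then invoke Theorems \ref{thm: Howe} and \ref{thm: polyunitary}. Your write-up merely spells out the verification of $\tau^{\natural}=\Lambda^{(\omega_n)}$ that the paper leaves as "easily verified."
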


\begin{proof}
Since $\Lambda$ is atypical and unitary, by \thmref{thm: unitary},  there exists $\mu \in \{1,2,\dots, n\}$ such that $(\Lambda+\rho, \epsilon_m-\delta_{\mu})= (\Lambda, \delta_{\mu}-\delta_{n})=0$.   Then  we have $\lambda_m+ \omega_{\mu}=\mu-1$ and  $\omega_{\mu}= \omega_{\mu+1}=\cdots= \omega_{n}.$ It follows that 
\begin{equation}\label{eq: modLa}
  \Lambda^{(\omega_{n})}=(\lambda_1+\omega_{n}, \dots, \lambda_m+ \omega_{n}, \omega_1-\omega_{n}, \dots,  \omega_{\mu-1}-\omega_{n}, 0, \dots, 0), 
\end{equation}
whose weight components are all integers. 

We set $d= m+ \omega_1- \omega_{n}$. Applying \thmref{thm: Howe} and \thmref{thm: polyunitary}, $S(\mathbb{C}^d\otimes \mathbb{C}^{m|n})$ decomposes into simple unitary $\mathfrak{gl}_d \oplus \mathfrak{gl}_{m|n}$-modules $L^d(\lambda)\otimes L^{m|n}(\lambda^{\natural}) $ for all $\lambda\in \mathcal{P}_d \cap \mathcal{P}_{m|n}$. It remains  to show that $\Lambda^{(\omega_{n})}= \sigma^{\natural}$ for some partition $\sigma \in \mathcal{P}_d \cap \mathcal{P}_{m|n}$. We define $\sigma=(\sigma_1, \dots, \sigma_d)$ as follows: $\sigma_i= \lambda_i +\omega_{n}$ for $1\leq i\leq m$, and 
\[(\sigma_{m+1},\dots ,\sigma_d)= (\omega_1-\omega_{n}, \dots, \omega_{\mu-1}-\omega_{n})^c,  \]
where the right hand side is a conjugate partition. Hence we obtain two  non-increasing integer sequences $\sigma_1\geq \sigma_2\geq \cdots \geq \sigma_m$ and $\sigma_{m+1}\geq \cdots\geq \sigma_d\geq 0$. By the definition of the conjugate partition, we have $\sigma_{m+1}\leq \mu-1$. On the other hand,  $\sigma_m= \lambda_m+\omega_{n}=\lambda_m+\omega_{\mu}= \mu-1$. It follows that $\sigma_m\geq \sigma_{m+1}$ and   $\sigma_{m+1}\leq \mu-1\leq n$. Therefore, $\sigma$ is an $(m, n)$-hook partition, and  it is easily verified that $\sigma^{\natural}=\Lambda^{(\omega_{n})}$. 
\end{proof}

\begin{remark}
  It was originally shown in \cite{GZ90} that  $L(\Lambda^{(\omega_n)})$  appears as a submodule of a tensor power of the natural module  $\mathbb{C}^{m|n}$  of  $\mathfrak{gl}_{m|n}$, and is therefore a polynomial module. In \lemref{lem: atypoly} we prove this fact directly using the Howe duality. 
\end{remark}

The following branching rule for polynomial modules is well-known \cite{BR87,SV10}. We provide a quick proof using the Howe duality; see also \cite{CPT15, LZ24}.  
\begin{proposition}\label{prop: hookbra}
Let $\lambda$ be an $(m,n)$-hook partition. The finite dimensional simple $\mathfrak{gl}_{m|n}$-module $L^{m|n}(\lambda^{\natural})$ admits the following multiplicity-free decomposition under the action of $\mathfrak{gl}_{m|n-1}$:
\[ L^{m|n}(\lambda^{\natural})= \bigoplus_{\lambda \rightarrow \lambda'} L^{m|n-1}(\lambda'^{\, \natural}),  \]
where the sum is taken over all $(m,n-1)$-hook partitions $\lambda'$ such that $\lambda \rightarrow \lambda'$, as defined in \eqref{eq: lasim}. 
\end{proposition}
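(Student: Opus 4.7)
The plan is to derive the decomposition by playing off the $(\mathfrak{gl}_d, \mathfrak{gl}_{m|n})$-Howe duality against the $(\mathfrak{gl}_d, \mathfrak{gl}_{m|n-1})$-Howe duality, for an auxiliary positive integer $d$ chosen large enough that $\lambda \in \mathcal{P}_d$, and then to extract the $L^d(\lambda)$-isotypic component from the two resulting decompositions of $S(\mathbb{C}^d \otimes \mathbb{C}^{m|n})$.

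The key observation is that, as a $\mathfrak{gl}_{m|n-1}$-supermodule, $\mathbb{C}^{m|n}$ splits as $\mathbb{C}^{m|n-1} \oplus \mathbb{C} e_{m+n}$, where $\mathbb{C} e_{m+n}$ is a trivial purely odd line. Since the supersymmetric algebra of a purely odd superspace coincides with the exterior algebra on its underlying vector space, and $\mathfrak{gl}_d$ acts naturally on the first tensor factor in each summand, this yields an isomorphism of $\mathfrak{gl}_d \oplus \mathfrak{gl}_{m|n-1}$-supermodules
\[
S(\mathbb{C}^d \otimes \mathbb{C}^{m|n}) \cong S(\mathbb{C}^d \otimes \mathbb{C}^{m|n-1}) \otimes \bigwedge(\mathbb{C}^d).
\]
Applying \thmref{thm: Howe} to each side, and then \thmref{thm: Pieri}(1) to rewrite each factor $\bigwedge(\mathbb{C}^d) \otimes L^d(\mu) \cong \bigoplus_{\lambda :\, \lambda \to \mu} L^d(\lambda)$, the right-hand side becomes
\[
\bigoplus_{\mu \in \mathcal{P}_d \cap \mathcal{P}_{m|n-1}} \bigoplus_{\lambda :\, \lambda \to \mu} L^d(\lambda) \otimes L^{m|n-1}(\mu^{\natural}),
\]
whereas the left-hand side reads $\bigoplus_{\lambda \in \mathcal{P}_d \cap \mathcal{P}_{m|n}} L^d(\lambda) \otimes L^{m|n}(\lambda^{\natural})|_{\mathfrak{gl}_{m|n-1}}$ after restriction.

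Since $\mathfrak{gl}_d$ and $\mathfrak{gl}_{m|n-1}$ act on mutually commuting tensor factors, I would then match the $L^d(\lambda)$-isotypic component on both sides: the left furnishes $L^{m|n}(\lambda^{\natural})|_{\mathfrak{gl}_{m|n-1}}$, while the right furnishes $\bigoplus_{\mu :\, \lambda \to \mu} L^{m|n-1}(\mu^{\natural})$, summed over $(m,n{-}1)$-hook partitions $\mu$. Cancelling the common $L^d(\lambda)$-factor and relabelling $\mu$ as $\lambda'$ delivers the claimed branching. The only detail worth checking is that the constraint $\mu \in \mathcal{P}_d$ imposes no extra restriction beyond the $(m, n-1)$-hook condition, but this is automatic because $\mu_i \leq \lambda_i$ for all $i$ and $\lambda \in \mathcal{P}_d$. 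I do not anticipate any genuine obstacle: the argument is a straightforward bookkeeping between two instances of \thmref{thm: Howe} bridged by one application of \thmref{thm: Pieri}(1), with the $\natural$-operation automatically using the correct $n$ versus $n-1$ convention on each side by \eqref{eq: lanat}.
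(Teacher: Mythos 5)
Your proposal is correct and follows essentially the same route as the paper's proof: factor $S(\mathbb{C}^d\otimes\mathbb{C}^{m|n})\cong S(\mathbb{C}^d\otimes\mathbb{C}^{m|n-1})\otimes\bigwedge(\mathbb{C}^d)$ upon restriction, apply \thmref{thm: Howe} on both sides together with Pieri's rule (\thmref{thm: Pieri}), and compare the $L^d(\lambda)$-isotypic components. Your explicit check that $\mu\in\mathcal{P}_d$ is automatic is a minor detail the paper leaves implicit, but the argument is the same.
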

\begin{proof}
Assume that $\lambda$ has length $d$. By the  $(\mathfrak{gl}_d, \mathfrak{gl}_{m|n})$-Howe duality from  \thmref{thm: Howe},  $L^d(\lambda)\otimes L^{m|n}(\lambda^{\natural})$ appears as a simple $\mathfrak{gl}_d\otimes \mathfrak{gl}_{m|n}$-submodule of $S(\mathbb{C}^d\otimes \mathbb{C}^{m|n})$. Restricting to $\mathfrak{gl}_d\oplus (\mathfrak{gl}_{m|n-1}\oplus \mathfrak{gl}_1)$, we have the isomorphism  $\mathbb{C}^d \otimes \mathbb{C}^{m|n}\cong \mathbb{C}^d \otimes \mathbb{C}^{m|n-1} \oplus \mathbb{C}^d \otimes \mathbb{C}^{0|1}$. This yields 
\begin{align*}
S(\mathbb{C}^d \otimes \mathbb{C}^{m|n})&\cong S(\mathbb{C}^d \otimes \mathbb{C}^{m|n-1} )\otimes S(\mathbb{C}^d \otimes \mathbb{C}^{0|1})\\
&\cong \bigoplus_{\lambda' \in \mathcal{P}_d \cap \mathcal{P}_{m|n-1}}L^{d}(\lambda')\otimes L^{m|n-1}(\lambda'^{\, \natural})\otimes \bigw(\mathbb{C}^d)   \\ 
&\cong \bigoplus_{\lambda' \in \mathcal{P}_d \cap \mathcal{P}_{m|n-1}} \bigoplus_{\sigma \rightarrow \lambda'} L^d(\sigma) \otimes L^{m|n-1}(\lambda'^{\, \natural}),
\end{align*}
where the last isomorphism follows from  Pieri's rule. By comparing this decomposition with the original appearance of $L^d(\lambda) \otimes L^{m|n}(\lambda^{\natural})$ in $S(\mathbb{C}^d \otimes \mathbb{C}^{m|n})$, we obtain the desired decomposition of $L^{m|n}(\lambda^{\natural})$ as a $\mathfrak{gl}_{m|n-1}$-module.  
\end{proof}
\begin{remark}\label{rmk: brawt}
  In terms of Young diagrams, the direct sum in \propref{prop: hookbra} is taken over all $(m, n-1)$-hook partitions $\lambda^{\prime}$ such that the skew diagram $\lambda / \lambda^{\prime}$ forms a vertical strip, i.e., each row of $\lambda / \lambda^{\prime}$ contains at most one box. For more details on skew diagrams, the reader can refer to \cite[Section 9.3.5]{GW09} 
 
 In terms of highest weights, the branching rule for $L^{m|n}(\lambda^{\natural})$ can be described as follows. If the length $d$ of $\lambda$ is less than $m$, then  by definition \eqref{eq: lanat} we have $\lambda^{\natural}=(\lambda_1, \dots, \lambda_d, 0, \dots ,0)$. Thus,  the direct sum in \propref{prop: hookbra} is taken over all highest weights $\lambda'^{\natural}\in D_{+}^{m|n-1}$ such that $\lambda^{\natural}_i - \lambda'^{\natural}_i\in \{0,1\}$ for all $1\leq i\leq d$.  If $d\geq m$, then using the bijection between  the Young diagrams of $(m,n)$-hook partitions $\lambda$ and highest weights $\lambda^\natural$,  the branching condition $\lambda\rightarrow \lambda'$ is equivalent to $\lambda^{\natural}_i - \lambda'^{\natural}_i\in \{0,1\}$ for all $1\leq i\leq m$ and $\lambda^{\natural}_{m+j}\geq \lambda'^{\natural}_{m+j}\geq \lambda^{\natural}_{m+j+1}$ for all $1\leq j\leq n-1$. Therefore, in both cases we have $\lambda^{\natural}\downarrow \lambda'^{\natural}$.
\end{remark}

\begin{proposition}\label{prop: atybra}
  Let  $L^{m|n}(\Lambda)$ be an atypical unitary  $\mathfrak{gl}_{m|n}$-module with highest weight $\Lambda\in D^{m|n}_{+}$. Then $L^{m|n}(\Lambda)$ admits a multiplicity-free decomposition into  unitary  simple  $\mathfrak{gl}_{m|n-1}$-modules:
  \[ L^{m|n}(\Lambda) \cong \bigoplus_{\Lambda\downarrow \Lambda'} L^{m|n-1}(\Lambda'). \]
\end{proposition}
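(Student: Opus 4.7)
The plan is to reduce the atypical case to the polynomial case via a one-dimensional twist and then apply \propref{prop: hookbra}. By \lemref{lem: atypoly}, there exists an $(m,n)$-hook partition $\sigma$ with $\Lambda^{(\omega_n)} = \sigma^{\natural}$, so $L^{m|n}(\sigma^{\natural})$ is a polynomial module and $L^{m|n}(\Lambda) \cong L^{m|n}(\sigma^{\natural}) \otimes \mathbb{C}_{-\omega_n}$. The same identification persists upon restriction to $\mathfrak{gl}_{m|n-1}$, with $\mathbb{C}_{-\omega_n}$ interpreted as the analogous one-dimensional $\mathfrak{gl}_{m|n-1}$-module. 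It therefore suffices to decompose $L^{m|n}(\sigma^{\natural})$ by \propref{prop: hookbra} and then twist each summand back by $\mathbb{C}_{-\omega_n}$.

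Combining \propref{prop: hookbra} with the weight-theoretic reformulation in \rmkref{rmk: brawt}, one obtains $L^{m|n}(\sigma^{\natural}) \cong \bigoplus_{\sigma^{\natural} \downarrow \sigma'^{\,\natural}} L^{m|n-1}(\sigma'^{\,\natural})$, where the interlacing means $\sigma^{\natural}_i - \sigma'^{\,\natural}_i \in \{0,1\}$ for $1 \leq i \leq m$ and $\sigma^{\natural}_{m+j} \geq \sigma'^{\,\natural}_{m+j} \geq \sigma^{\natural}_{m+j+1}$ for $1 \leq j \leq n-1$. Setting $\Lambda' := (\sigma'^{\,\natural})^{(-\omega_n)}$, the $\pm\omega_n$-shift preserves all pairwise weight differences, so these conditions transfer directly into the non-special part of (C1) and all of (C2) for $\Lambda \downarrow \Lambda'$. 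Each $L^{m|n-1}(\Lambda')$ is unitary because $L^{m|n-1}(\sigma'^{\,\natural})$ is a polynomial module, hence unitary by \thmref{thm: polyunitary}, and tensoring with the unitary module $\mathbb{C}_{-\omega_n}$ preserves the positive definite contravariant Hermitian form.

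The main obstacle is reconciling the exceptional clause of (C1): when $(\Lambda + \rho, \epsilon_m - \delta_1) = 0$, one must deduce $\lambda'_m = \lambda_m$ rather than merely $\lambda_m - \lambda'_m \in \{0,1\}$. I would argue that in the atypical unitary setting, \rmkref{rmk: comp} together with \thmref{thm: unitary}(2) forces the defect index $\mu$ to equal $1$: indeed, if $\mu > 1$ then $\omega_1 \geq \omega_\mu$ with $\lambda_m + \omega_\mu = \mu - 1 \geq 1$ yields $\lambda_m + \omega_1 \geq 1$, contradicting the vanishing. Hence $\omega_1 = \cdots = \omega_n$ and $\lambda_m + \omega_n = \mu - 1 = 0$. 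Tracing through \lemref{lem: atypoly} then gives $\sigma_m = 0$, so $\sigma^{\natural}_m = 0$; since $\sigma'$ must be a partition with non-negative entries, the constraint $\sigma^{\natural}_m - \sigma'^{\,\natural}_m \in \{0,1\}$ collapses to $\sigma'^{\,\natural}_m = 0 = \sigma^{\natural}_m$, and undoing the $\omega_n$-twist gives $\lambda'_m = \lambda_m$ as required. Conversely, when $(\Lambda + \rho, \epsilon_m - \delta_1) > 0$ one checks that $\sigma^{\natural}_m = \mu - 1 \geq 1$, so both values $\sigma'^{\,\natural}_m \in \{\sigma^{\natural}_m, \sigma^{\natural}_m - 1\}$ remain admissible and the unrestricted condition $\lambda_m - \lambda'_m \in \{0,1\}$ is recovered. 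Establishing this bijection between the partition non-negativity and the weight-theoretic interlacing is the only delicate step; the remaining verifications are immediate.
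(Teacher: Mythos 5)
Your proposal is correct and follows essentially the same route as the paper: twist by $\mathbb{C}_{\omega_n}$ to realise $L^{m|n}(\Lambda)$ as a polynomial module via \lemref{lem: atypoly}, apply \propref{prop: hookbra} together with \rmkref{rmk: brawt}, and twist back by $\mathbb{C}_{-\omega_n}$. Your analysis of the exceptional clause of (C1) (showing $(\Lambda+\rho,\epsilon_m-\delta_1)=0$ forces $\mu=1$, hence $\sigma_m=0$ and $\lambda'_m=\lambda_m$) is exactly the paper's case split $\mu=1$ versus $\mu>1$, just phrased from the other direction.
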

\begin{proof}
   
Note that $L^{m|n}(\Lambda)\cong L^{m|n}(\Lambda^{(\omega_n)})\otimes \mathbb{C}_{-\omega_n}$ as a $\mathfrak{gl}_{m|n}$-module. We shall write   $\Lambda=(\lambda_1, \dots, \lambda_m, \omega_1, \dots, \omega_n)$ and $\Lambda'=(\lambda'_1, \dots, \lambda'_m, \omega'_1, \dots, \omega'_{n-1})$.    Since $\Lambda$ is atypical and unitary, by \thmref{thm: unitary}, there exists $1\leq \mu\leq n$ such that $(\Lambda+\rho, \epsilon_m-\delta_{\mu})=(\Lambda, \delta_1-\delta_{\mu})=0$. We consider two cases for $\mu$.

Case 1:  $\mu=1$. We have $\lambda_m+\omega_1=0$ and $\omega_1=\omega_2=\dots=\omega_n$. 
Hence, 
\[
  \Lambda^{(\omega_n)}=(\Lambda^{(\omega_n)}_1, \dots , \Lambda^{(\omega_n)}_{m+n})=  (\lambda_1+\omega_n, \dots, \lambda_{m-1}+\omega_n, 0, \dots, 0).
\] 
As shown in the poof of \lemref{lem: atypoly},  one can construct an $(m,n)$-hook partition $\lambda$ associated to $\Lambda^{(\omega_n)}$ of length $d= m-1$. Hence,  by \propref{prop: hookbra}, $L^{m|n}(\Lambda^{(\omega_n)})$ decomposes into a multiplicity-free direct sum of  simple unitary $\mathfrak{gl}_{m|n-1}$ modules $L^{m|n-1}(\Lambda'^{(\omega_n)})$, where  $\Lambda'^{(\omega_n)}=(\Lambda'^{(\omega_n)}_1, \dots , \Lambda'^{(\omega_n)}_{m-1}, 0\dots, 0) \in D^{m|n-1}_+$ with $\Lambda^{(\omega_n)}_i-\Lambda'^{(\omega_n)}_i\in \{0,1\}$ for all $1\leq i\leq m-1$. Tensoring with $\mathbb{C}_{-\omega_n}$, we obtain the desired decomposition of $L^{m|n}(\Lambda)$ into simple unitary $\mathfrak{gl}_{m|n-1}$-modules $L^{m|n-1}(\Lambda')$, where $\lambda_m= \lambda_m'=-\omega_n$ and $\lambda_i-\lambda'_i=\Lambda^{(\omega_n)}_i-\Lambda'^{(\omega_n)}_i \in\{0,1\}$ for all  $1\leq i\leq m-1$, as well as $\omega_1'=\dots =\omega'_{n-1}=\omega_n$. These satisfy conditions (C1) and (C2) of the interlacing relation $\Lambda \downarrow \Lambda'$.     

Case 2: $\mu>1$. Here,  $\lambda_m+\omega_n=\mu-1>0$. Hence   $\Lambda^{(\omega_n)}$ is of  form \eqref{eq: modLa} with $\Lambda^{(\omega_n)}_m= \lambda_m+\omega_n>0$. Following  a similar process as above, we obtain the desired decomposition of $L^{m|n}(\Lambda)$ into simple unitary $\mathfrak{gl}_{m|n-1}$-modules $L^{m|n-1}(\Lambda')$, where $\lambda_i-\lambda'_i\in \{0,1\}$ for all $1\leq i\leq m$ and $ \omega_{\mu} \geq \omega'_{\mu}\geq  \omega_{\mu+1}$ and $\omega_{\mu}- \omega'_{\mu}\in \mathbb{Z}_+$ for all  $1\leq \mu \leq n-1$, as specified by the  interlacing relation $\Lambda \downarrow \Lambda'$. 
\end{proof}

\begin{remark}
Unlike  in \propref{prop: tybra}, the modules $L^{m|n-1}(\Lambda')$ appearing  in the decomposition given by \propref{prop: atybra} are not necessarily atypical, although $\Lambda$ is atypical.  
\end{remark}

Now we summarise the proof of \thmref{thm: main}. 
\begin{proof}[Proof of \thmref{thm: main}]
  By \thmref{thm: unitary} and \rmkref{rmk: typ}, the highest weight $\Lambda$ of $L^{m|n}(\Lambda)$ is either typical or atypical. In the typical case, the desired decomposition follows from \propref{prop: tybra}, whereas in the atypical case, it follows from \propref{prop: atybra}.
\end{proof}

\section{Type 2 unitary branching rules}\label{sec: type2uni}
As mentioned in the Introduction, there are two types of unitary $\mathfrak{gl}_{m|n}$-modules \cite{SNR77, GZ90}. Using the branching rules of type 1 unitary modules, we will derive the corresponding rules for type 2 unitary modules.

\subsection{Type 2 unitary modules}

Following \cite{SNR77, GZ90}, a simple $\mathfrak{gl}_{m|n}$-module $L(\Upsilon)$ is said to be \emph{type 2 unitary} if there exists a positive definite Hermitian form $\langle-, -\rangle$ on $L(\Upsilon)$ such that for all $v,w\in  L(\Upsilon)$, 
\begin{equation}\label{eq: Her2}
  \langle E_{ij}v, w\rangle= (-1)^{[E_{ij}]} \langle v, E_{ji} w\rangle, \quad 1\leq i,j \leq m+n.
\end{equation}
 Type 2 unitary modules incorporate signs in their definition, compared to definition \eqref{eq: Her} of (type 1) unitary modules. Indeed, they are related by duality, as stated below.

\begin{proposition}\cite[Proposition 1]{GZ90}\label{prop: type2}
Let $\Lambda\in D^{m|n}_+$. The simple $\mathfrak{gl}_{m|n}$-module $L(\Lambda)$ is (type 1) unitary if and only if its dual $L(\Lambda)^*$ is type 2 unitary. 
\end{proposition}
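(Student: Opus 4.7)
The plan is to establish the equivalence by transporting the Hermitian form across the canonical anti-linear isomorphism induced by duality, and then tracking how the super-signs in the dual-module action convert the type 1 $\ast$-structure $\phi$ on ${\rm U}(\mathfrak{gl}_{m|n})$ into the twisted anti-involution $\phi_2$ with $\phi_2(E_{ij})=(-1)^{[E_{ij}]}E_{ji}$ that governs contravariance in \eqnref{eq: Her2}.

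For the forward direction, suppose $\langle-,-\rangle$ is a type 1 unitary form on $L(\Lambda)$. I would first use positive-definiteness to build an even anti-linear $\mathbb{C}$-isomorphism $\theta\colon L(\Lambda)\to L(\Lambda)^*$ by $\theta(v)(w)=\langle w,v\rangle$, and then transport the form by setting
\[
\langle \theta(v),\theta(u)\rangle^*\;:=\;\langle u,v\rangle.
\]
Routine linearity, Hermitian symmetry, and positive-definiteness checks follow immediately from $\theta(cv)=\bar c\,\theta(v)$ and the corresponding properties of $\langle-,-\rangle$. The substantive step is verifying type 2 contravariance. Using the dual-module action $(X\cdot f)(w)=-(-1)^{[X][f]}f(Xw)$ together with the type 1 contravariance of $\langle-,-\rangle$, one derives the identity
\[
X\cdot \theta(v)\;=\;-(-1)^{[X][v]}\,\theta(\phi(X)v).
\]
Substituting this into both $\langle X\cdot\theta(v),\theta(u)\rangle^*$ and $\langle \theta(v),\phi_2(X)\cdot\theta(u)\rangle^*$, and using $\phi^2=\mathrm{id}$ together with $\phi_2(X)=(-1)^{[X]}\phi(X)$, both expressions reduce to the common value $-(-1)^{[X][v]}\langle u,\phi(X)v\rangle$, once one observes that the a priori discrepancy $(-1)^{[X]([u]+[v]+1)}$ is trivial on the support of the form (the parity matching $[u]+[v]=[X]$ is forced for the relevant matrix entries to be nonzero).

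The converse runs along the same lines, with $L(\Lambda)$ replaced by $L(\Lambda)^*$ and $\phi$ replaced by $\phi_2$; the sign calculation now produces an additional factor $(-1)^{[X]}$ that cancels the twist already present in $\phi_2$, so the transported form is type 1 contravariant on $L(\Lambda)^{**}$. Pulling back along the canonical even $\mathfrak{gl}_{m|n}$-equivariant isomorphism $L(\Lambda)\cong L(\Lambda)^{**}$ (sending $v\mapsto(f\mapsto(-1)^{[v][f]}f(v))$) then yields a type 1 unitary form on $L(\Lambda)$. The principal obstacle is the unambiguous bookkeeping of super-signs coming from three independent sources—the anti-linearity of $\theta$, the super-swap $(-1)^{[X][f]}$ built into the dual action, and the parity twist distinguishing $\phi_2$ from $\phi$—but these combine precisely into the single factor $(-1)^{[X]}$ that effects the interchange between the two $\ast$-structures, which is the structural reason duality swaps types 1 and 2.
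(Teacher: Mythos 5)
Your argument is correct, and it is essentially the standard proof of this fact: the paper itself gives no argument here but simply cites \cite[Proposition 1]{GZ90}, whose proof proceeds by the same transport of the contravariant form through the conjugate-linear isomorphism $\theta(v)=\langle -,v\rangle$, with the super-sign in the dual action accounting for the passage from $\phi$ to the twisted anti-involution $\phi_2(E_{ij})=(-1)^{[E_{ij}]}E_{ji}$. Your sign bookkeeping checks out in both directions, including the reduction of the apparent discrepancy $(-1)^{[X]([u]+[v]+1)}$ to $1$ on the support of the form, and the use of the canonical even equivariant isomorphism $L(\Lambda)\cong L(\Lambda)^{**}$ to return from the double dual. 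The only step you should make explicit is the claim that the pairing is supported on parity-matched vectors (equivalently that $\theta$ is even): since $\phi$ fixes the Cartan subalgebra, distinct weight spaces of $L(\Lambda)$ are orthogonal for any contravariant form, and each weight space of $L(\Lambda)$ is parity-homogeneous (the weight determines the degree in the $\mathbb{Z}$-grading \eqnref{eq: Lgrad}), so the form is automatically even and the condition $[u]+[v]=[X]$ on the support is justified.
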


If  $\Lambda^-$ is the lowest weight of a simple $\mathfrak{gl}_{m|n}$-module $L(\Lambda)$, then the highest weight of the dual $L(\Lambda)^*$ is $\Lambda^*= -\Lambda^-$.  For typical unitary module $L(\Lambda)$, it follows from the  Kac module construction  that the  highest weight of the dual is  
\begin{equation}\label{eq: dualLa}
  \Lambda^*= -w_0(\Lambda -\sum_{i=1}^{m}\sum_{\mu=1}^n(\epsilon_i-\delta_\mu))= (n-\lambda_m, \dots, n-\lambda_1, -\omega_n-m, \dots, -\omega_1-m),
\end{equation}
where $\Lambda=(\lambda_1, \dots, \lambda_m, \omega_1, \dots, \omega_n)$ and $w_0$ is the longest element of the Weyl group of $\mathfrak{gl}_{m|n}$. For atypical unitary module $L(\Lambda)$, an explicit formula for $\Lambda^{\ast}$ was  determined  in \cite{GZ90}; we will give an alternative formula below.  The formulas for $\Lambda^*$, combined with \thmref{thm: unitary}, yield the following  classification of type 2 unitary modules. 

\begin{theorem}\cite[Theorem 4]{GZ90}\label{thm: type2unitary}
  Let $\Upsilon\in D_+^{m|n}$. The finite-dimensional simple  $\mathfrak{gl}_{m|n}$-module $L(\Upsilon)$ is type 2 unitary if and only if 
  \begin{enumerate}
  \item $(\Upsilon+ \rho, \epsilon_1-\delta_1)<0 $; or 
  \item there exists $k \in \{1, 2, \dots, m\}$ such that 
  \[(\Upsilon+ \rho, \epsilon_k-\delta_{1}) = (\Upsilon, \epsilon_1-\epsilon_k)=0. \]
  \end{enumerate}
\end{theorem}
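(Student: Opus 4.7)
The plan is to reduce the classification in \thmref{thm: type2unitary} to the type 1 classification (\thmref{thm: unitary}) via the duality of \propref{prop: type2}. By that proposition, $L(\Upsilon)$ is type 2 unitary if and only if $L(\Upsilon)^{\ast}\cong L(\Lambda)$ is type 1 unitary, where $\Lambda=\Upsilon^{\ast}$ denotes the highest weight of the contragredient module. The task thus reduces to expressing $\Lambda^{\ast}$ explicitly in terms of $\Upsilon$ and translating the conditions of \thmref{thm: unitary} into conditions on $\Upsilon$.

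In the typical case I would apply the explicit formula \eqref{eq: dualLa}. Inverting it yields $\lambda_m=n-\upsilon_1$ and $\omega_n=-m-\sigma_1$, where we write $\Upsilon=(\upsilon_1,\dots,\upsilon_m,\sigma_1,\dots,\sigma_n)$. A direct computation gives $(\rho_{m|n},\epsilon_m-\delta_n)=1-n$ and $(\rho_{m|n},\epsilon_1-\delta_1)=m-1$, whence
\[
(\Lambda+\rho,\epsilon_m-\delta_n)=\lambda_m+\omega_n+1-n=-(\upsilon_1+\sigma_1+m-1)=-(\Upsilon+\rho,\epsilon_1-\delta_1).
\]
Hence condition (1) of \thmref{thm: unitary} for $\Lambda$ is equivalent to condition (1) of \thmref{thm: type2unitary} for $\Upsilon$, which settles the typical case.

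For the atypical case the formula \eqref{eq: dualLa} no longer applies, and an alternative description of $\Lambda^{\ast}$ is required. My plan is to use \lemref{lem: atypoly} to realise the atypical unitary module as $L(\Lambda)\cong L(\sigma^{\natural})\otimes\mathbb{C}_{-\omega_n}$ for some $(m,n)$-hook partition $\sigma$, so that $L(\Lambda)^{\ast}\cong L(\sigma^{\natural})^{\ast}\otimes\mathbb{C}_{\omega_n}$. The highest weight of $L(\sigma^{\natural})^{\ast}$ can then be computed by running an analogue of the Howe duality in \thmref{thm: Howe} on $S(\mathbb{C}^d\otimes(\mathbb{C}^{m|n})^{\ast})$ in place of $S(\mathbb{C}^d\otimes\mathbb{C}^{m|n})$, identifying $L(\sigma^{\natural})^{\ast}$ with a hook polynomial module associated to a conjugate hook partition. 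Translating the atypicality equations $(\Lambda+\rho,\epsilon_m-\delta_{\mu})=(\Lambda,\delta_{\mu}-\delta_n)=0$ with $\mu\in\{1,\dots,n\}$ through this description then produces the equations $(\Upsilon+\rho,\epsilon_k-\delta_1)=(\Upsilon,\epsilon_1-\epsilon_k)=0$ for a corresponding $k\in\{1,\dots,m\}$.

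The main obstacle lies in this last step: the duality $\Lambda\mapsto\Lambda^{\ast}$ \emph{swaps} the even and odd halves of the atypicality data, so that the equality of the tail $\omega_{\mu}=\cdots=\omega_n$ on the $\Lambda$-side becomes the equality of the head $\upsilon_1=\cdots=\upsilon_k$ on the $\Upsilon$-side. Making this swap explicit—either via the conjugate hook partition or by an independent computation of the lowest weight of an atypical unitary module—is the key technical step, and it accounts for why the atypicality parameter $k$ in \thmref{thm: type2unitary} ranges over $\{1,\dots,m\}$ whereas $\mu$ in \thmref{thm: unitary} ranges over $\{1,\dots,n\}$. Once this translation is established, the two conditions of \thmref{thm: type2unitary} together exhaust both the typical and atypical cases, completing the classification.
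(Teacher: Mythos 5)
Your route --- reduce to the type 1 classification via \propref{prop: type2} together with explicit formulas for the dual highest weight --- is the route the paper itself indicates (it does not reprove the statement: it cites \cite[Theorem 4]{GZ90} and merely notes that the formulas for $\Lambda^*$ combined with \thmref{thm: unitary} yield it). Your typical case is correct and complete: with $\Lambda=\Upsilon^*$ given by \eqref{eq: dualLa} one indeed gets $(\Lambda+\rho,\epsilon_m-\delta_n)=-(\Upsilon+\rho,\epsilon_1-\delta_1)$, and condition (1) of \thmref{thm: type2unitary} forces $\Upsilon$ to be typical by the argument of \rmkref{rmk: typ}, so \eqref{eq: dualLa} is legitimately applicable in both directions.

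The atypical case, however, has two genuine gaps. First, $L(\lambda^{\natural})^*$ is not ``a hook polynomial module associated to a conjugate hook partition'': the dual of a covariant (polynomial) module is a contravariant module, and its highest weight does not lie among the $\tau^{\natural}$. What you actually need is the lowest-weight formula the paper quotes from \cite{CW12,Zha20}, $\lambda^-=(\langle\lambda_m-n\rangle,\dots,\langle\lambda_1-n\rangle,\lambda^c_n,\dots,\lambda^c_1)$, so that $\lambda^*=-\lambda^-$; and the translation of atypicality then needs care: writing $\Upsilon=\lambda^*+\omega_n\bigl(\sum_i\epsilon_i-\sum_\mu\delta_\mu\bigr)$, the index realising condition (2) is $k=m-\lambda^c_n$, which differs from ``$m$ minus the number of rows of $\lambda$ exceeding $n$'' whenever some $\lambda_i=n$ (e.g.\ for the dual of the natural module of $\mathfrak{gl}_{2|1}$ one must take $k=1$, not $k=2$). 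Second, your argument only gives one implication: atypical type 1 unitary $\Lambda$ forces $\Upsilon=\Lambda^*$ to satisfy condition (2). The converse --- that \emph{every} $\Upsilon\in D^{m|n}_+$ satisfying condition (2) is type 2 unitary --- does not follow from ``the two conditions exhaust both cases''; you must exhibit such a $\Upsilon$ as a dual. Concretely, writing $\Upsilon=(\upsilon_1,\dots,\upsilon_m,\sigma_1,\dots,\sigma_n)$, the relations $\upsilon_1=\cdots=\upsilon_k$ and $\upsilon_k+\sigma_1+m-k=0$ (note $(\rho,\epsilon_k-\delta_1)=m-k$), together with dominance, let one construct an $(m,n)$-hook partition $\tau$ with $\Upsilon-\upsilon_1\bigl(\sum_i\epsilon_i-\sum_\mu\delta_\mu\bigr)=-\tau^-$, whence $L(\Upsilon)\cong\bigl(L(\tau^{\natural})\otimes\mathbb{C}_{-\upsilon_1}\bigr)^*$ is the dual of a type 1 unitary module and \propref{prop: type2} applies. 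This integrality/hook-consistency verification is the missing half of the classification and is precisely where both equalities in condition (2) are used.
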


\thmref{thm: type2unitary} implies that taking the dual preserves both typicality and atypicality of unitary simple $\mathfrak{gl}_{m|n}$-modules. More precisely,   if $\Lambda$ is a typical unitary highest weight satisfying $(\Lambda+\rho, \epsilon_m-\delta_n)>0$, then the dual highest weight $\Lambda^*$ is typical type 2 unitary satisfying $(\Lambda^*+ \rho, \epsilon_1-\delta_1)<0$; and vice versa. This holds similarly for atypical unitary modules.

Recall from \lemref{lem: atypoly} that every atypical unitary module can be realised as a polynomial module up to a twist by a 1-dimensional module.  It is thus natural to describe highest weights of the dual atypical unitary modules in terms of partitions. Let $\lambda=(\lambda_1, \dots, \lambda_d)$ be an $(m,n)$-hook partition, and let $L(\lambda^{\natural})$ be the polynomial module with the highest weight $\lambda^{\natural}$ associated to $\lambda$. Recall that $\lambda^{\natural}$ is given by  formula \eqref{eq: lanat}.  The lowest weight of $L(\lambda^{\natural})$  is 
$\lambda^{-}=(\langle \lambda_m-n\rangle, \dots, \langle \lambda_1-n\rangle, \lambda^c_n, \dots, \lambda^c_1)$; refer to \cite[\S 2.4]{CW12} or \cite[Proposition A.2]{Zha20}.   Therefore, the highest weight of the dual $L(\lambda^{\natural})^*$ is $\lambda^*=-\lambda^{-}$. This establishes a bijection between $(m,n)$-hook partitions $\lambda$ and dual highest weights $\lambda^*$ (with respect to the standard Borel subalgebra).  

Taking dual on both sides of the decomposition of $L(\lambda^{\natural})$ from \propref{prop: hookbra}, we obtain the branching rule for its dual module
\begin{equation}\label{eq: dualdec}
  L^{m|n}(\lambda^{\natural})^*= \bigoplus_{\lambda \rightarrow \lambda'} L^{m|n-1}(\lambda'^{\, \natural})^*. 
\end{equation}
Therefore, the decomposition of the dual obeys the same branching rule in terms of partitions. Alternatively, this can be described in terms of highest weights as follows. Let $r\in \{0, 1, \dots, m\}$ be the integer such that the first $m$ parts of $\lambda=(\lambda_1, \dots, \lambda_d)$ satisfy 
\begin{equation}\label{eq: atyla}
  \lambda_1\geq \lambda_2\geq \dots \geq \lambda_r>n\geq \lambda_{r+1}\geq \dots \geq \lambda_m,
\end{equation}
where  $\lambda_k:=0$ if $k>d$. Note that if $r=m$, then $\lambda^{\natural}$ is typical.  Then the highest weight of $L^{m|n}(\lambda^{\natural})^*$ is 
\[ \lambda^*=-\lambda^{-}= (0, \dots, 0, n-\lambda_r, \dots, n-\lambda_1, -\lambda^c_n, \dots, -\lambda^c_1),\] 
where the first $m-r$ entries are all $0$. Since $\lambda \rightarrow \lambda'$, i.e., $\lambda_i-\lambda'_i\in \{0,1\}$ for all $1\leq i\leq d$, the highest weight $\lambda^{\prime *}$ of $L^{m|n-1}(\lambda'^{\, \natural})^*$ is related to $\lambda^*$ by
\begin{equation}\label{eq: polyint}
  \begin{aligned}
    &\lambda^*_i= \lambda^{\prime *}_i=0, \quad  1\leq i\leq m-r, \\
    & \lambda^*_i- \lambda^{\prime *}_i\in \{0,1\}, \quad m-r+1 \leq i\leq m, \\
    & \lambda^*_{m+j}\geq \lambda^{\prime *}_{m+j}\geq \lambda^*_{m+j+1}, \quad 1\leq j\leq n-1.  
  \end{aligned}
\end{equation}

\subsection{Type 2 unitary branching rules}
We proceed  to derive the branching rules of type 2 unitary modules by employing the duality betweens type 1 and type 2 unitary modules. 

Let $\Upsilon=(\lambda_1,\dots, \lambda_m, \omega_1, \dots, \omega_{n}) \in P^{m|n}_+$   and $\Upsilon'= (\lambda'_1,\dots, \lambda'_m, \omega'_1, \dots, \omega'_{n-1})\in  P^{m|n-1}_+$. We shall write $\Upsilon \Downarrow \Upsilon'$ if the following type 2 interlacing  conditions hold: 
\begin{enumerate}
\item [(C1$'$)]If there exists a maximal  $k \in \{1, 2, \dots, m\}$ such that $(\Upsilon+\rho, \epsilon_k-\delta_1) = 0$, then $\lambda_i = \lambda^{\prime}_i$ for all  $1 \leq i \leq k$ , and $\lambda_i - \lambda^{\prime}_i \in \{0, 1\}$ for $k < i \leq m$. If no such  $k$ exists, then $\lambda_i - \lambda^{\prime}_i \in \{0, 1\}$ for all $1 \leq i \leq m$. 
\item [(C2)] $ \omega_{\mu} \geq \omega'_{\mu}\geq  \omega_{\mu+1}$ and $\omega_{\mu}- \omega'_{\mu}\in \mathbb{Z}_+$ for all  $1\leq \mu \leq n-1$. 
\end{enumerate}
We note that if $L^{m|n}(\Upsilon)$ is type 2 unitary and atypical, then in (C1$'$), the equality $(\Upsilon+\rho, \epsilon_k-\delta_1) = 0$ implies that $(\Upsilon, \epsilon_1-\epsilon_k)=0$ by \thmref{thm: type2unitary}.

\begin{lemma}\label{lem: dualdec}
Let $L^{m|n}(\Lambda)$ be a type 1 unitary simple $\mathfrak{gl}_{m|n}$-module. Then the dual module $L^{m|n}(\Lambda)^*$ admits the following multiplicity-free decomposition under the action of $\mathfrak{gl}_{m|n-1}$:   
\[ L^{m|n}(\Lambda)^*\cong \bigoplus_{\Lambda^* \Downarrow \Lambda^{'*}} L^{m|n-1}(\Lambda')^*.   \]
\end{lemma}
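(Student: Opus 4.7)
The plan is to derive the type 2 decomposition directly from \thmref{thm: main} by dualising both sides, and then to verify that the interlacing relation $\Lambda \downarrow \Lambda'$ translates to $\Lambda^* \Downarrow \Lambda^{'*}$ under the bijection $\Lambda \leftrightarrow \Lambda^*$. First I would apply \thmref{thm: main} to obtain $L^{m|n}(\Lambda) \cong \bigoplus_{\Lambda \downarrow \Lambda'} L^{m|n-1}(\Lambda')$. Since the linear dual is an exact contravariant functor on finite-dimensional modules that commutes with restriction to $\mathfrak{gl}_{m|n-1}$ and preserves simplicity and multiplicities, this immediately yields
\[
L^{m|n}(\Lambda)^* \cong \bigoplus_{\Lambda \downarrow \Lambda'} L^{m|n-1}(\Lambda')^*.
\]
It then remains to show that the pairs $(\Lambda^*, \Lambda^{'*})$ arising from $\Lambda \downarrow \Lambda'$ are exactly those satisfying $\Lambda^* \Downarrow \Lambda^{'*}$.

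For the typical case, I would use formula \eqref{eq: dualLa} directly: $\Lambda^*_i = n - \lambda_{m+1-i}$ for $1 \leq i \leq m$ and $\Lambda^*_{m+\nu} = -\omega_{n+1-\nu} - m$ for $1 \leq \nu \leq n$, with an analogous formula for $\Lambda^{'*}$ using $n-1$ in place of $n$. A direct calculation then gives $\Lambda^*_i - \Lambda^{'*}_i = 1 - (\lambda_{m+1-i} - \lambda'_{m+1-i}) \in \{0,1\}$, and the reverse odd interlacing on $(\Lambda^*, \Lambda^{'*})$ reduces to the usual one on $(\Lambda, \Lambda')$. Since typicality of $\Lambda$ forces $(\Lambda+\rho, \epsilon_m - \delta_1) > 0$ and typicality of $\Lambda^*$ (in the type 2 sense) rules out any $k$ with $(\Lambda^* + \rho, \epsilon_k - \delta_1) = 0$, neither (C1) nor (C1$'$) imposes a nontrivial extra constraint, and the equivalence $\Lambda \downarrow \Lambda' \Leftrightarrow \Lambda^* \Downarrow \Lambda^{'*}$ follows.

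For the atypical case, I would first invoke \lemref{lem: atypoly} to write $L^{m|n}(\Lambda) \cong L^{m|n}(\lambda^{\natural}) \otimes \mathbb{C}_{-\omega_n}$ for an $(m,n)$-hook partition $\lambda$, so that $L^{m|n}(\Lambda)^* \cong L^{m|n}(\lambda^{\natural})^* \otimes \mathbb{C}_{\omega_n}$. Applying the dual polynomial branching \eqref{eq: dualdec} together with the highest-weight description \eqref{eq: polyint}, and then retwisting by $\mathbb{C}_{\omega_n}$, produces a candidate decomposition of $L^{m|n}(\Lambda)^*$. To match this with (C1$'$) and (C2), the key observation is that the shelf index $r$ from \eqref{eq: atyla} applied to $\lambda$ translates into the maximal integer $k = m - r$ satisfying $(\Lambda^* + \rho, \epsilon_k - \delta_1) = 0$, so that the vanishing of the first $m - r$ entries of $\lambda^*$ and $\lambda^{'*}$ in \eqref{eq: polyint} becomes precisely the equalities $\lambda^*_i = \lambda^{'*}_i$ for $1 \leq i \leq k$ stipulated in (C1$'$). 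The main obstacle is this last combinatorial bookkeeping: tracing the atypicality index $\mu$ from \thmref{thm: unitary} through the construction of $\lambda$ in \lemref{lem: atypoly}, then through the shelf index $r$ of the dual partition, and finally to the $k$ of (C1$'$), while handling uniformly the boundary case $\mu = 1$ (where $\lambda$ has length $m-1$ and $\Lambda^{(\omega_n)}_m = 0$, cf.~\eqref{eq: modLa}).
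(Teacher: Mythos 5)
Your proposal is correct and follows essentially the same route as the paper: dualise the decomposition of \thmref{thm: main}, handle the typical case directly via \eqref{eq: dualLa}, and handle the atypical case through \lemref{lem: atypoly}, \eqref{eq: dualdec} and \eqref{eq: polyint}, identifying the shelf index $r$ of \eqref{eq: atyla} with the atypicality position $k=m-r$ in (C1$'$) via $(\Lambda^*+\rho,\epsilon_{m-r}-\delta_1)=0$. The ``bookkeeping'' you defer is exactly the short computation the paper carries out, so there is no substantive gap.
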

\begin{proof} 
 By taking dual on both sides of the decomposition in \thmref{thm: main}, we obtain 
 \[ L^{m|n}(\Lambda)^*\cong \bigoplus_{\Lambda \downarrow \Lambda'} L^{m|n-1}(\Lambda')^*.   \]
It remains to show that  $\Lambda^*\Downarrow \Lambda^{'*}$ for every pair $\Lambda \downarrow \Lambda'$. 

If $\Lambda$ is typical, then by \propref{prop: tybra}  $\Lambda'$ is also typical. It follows from formula \eqref{eq: dualLa} that $\Lambda^*\Downarrow \Lambda^{'*}$ for any $\Lambda'$ such that $\Lambda\downarrow \Lambda'$. 

If $\Lambda=(\lambda_1, \dots, \lambda_n, \omega_1,\dots, \omega_n)$ is atypical, then there exists $1\leq \mu\leq n$ such that $(\Lambda+\rho, \epsilon_m-\delta_{\mu})=(\Lambda, \delta_{\mu}-\delta_n)=0$. By \lemref{lem: atypoly},  $L(\Lambda^{(\omega_n)})=L(\Lambda)\otimes \mathbb{C}_{\omega_{n}}$ is a polynomial module with highest weight 
\[  \Lambda^{(\omega_{n})}=(\lambda_1+\omega_{n}, \dots, \lambda_m+ \omega_{n}, \omega_1-\omega_{n}, \dots,  \omega_{\mu-1}-\omega_{n}, 0, \dots, 0),\]
where $\lambda_m+ \omega_{n}=\mu-1<n$. It suffices to prove that  $(\Lambda^{(\omega_n)})^*\Downarrow (\Lambda^{'(\omega_n)})^{*}$. 

As in the proof of \lemref{lem: atypoly}, there exists an  $(m,n)$-hook partition $\sigma=(\sigma_1, \dots, \sigma_d)$ such that $\sigma^{\natural}= \Lambda^{(\omega_n)}$, with $d= m+ \omega_1-\omega_n$. Recall that the components are given by  $\sigma_i=\lambda_i+ \omega_n$ for $1\leq i\leq m$ and $(\sigma_{m+1},\dots ,\sigma_d)= (\omega_1-\omega_{n}, \dots, \omega_{\mu-1}-\omega_{n})^c$.  As described in \eqref{eq: dualdec}, the module  $L^{m|n}(\sigma^{\natural})^*$ decomposes into a multiplicity-free direct sum of simple modules $L^{m|n-1}(\sigma^{\, \prime \natural})^*$ over all $\sigma'$ such that  $\sigma\rightarrow \sigma'$ (which is equivalent to $\sigma^{\natural}\downarrow \sigma'^{\natural}$). Let $r\in \{0,1,\dots, m-1\}$ be the integer for which \eqref{eq: atyla} is satisfied for $\sigma$. Then we have 
\[(\sigma^{*}+ \rho, \epsilon_{m-r}-\delta_1)= \sigma^{*}_{m-r}+ \sigma^{*}_{m+1}+ r =0+ (-r)+r=0.\]
The highest weights $\sigma^*$ and $\sigma^{\prime *}$ of the dual modules $L^{m|n}(\sigma^{\natural})^*$ and  $L^{m|n-1}(\sigma^{\, \prime \natural})^*$ are related as in \eqnref{eq: polyint}. Thus, conditions (C1$'$) and (C2) are satisfied, so $\sigma^{*}\Downarrow \sigma^{\prime *}$.  
\end{proof}

\begin{theorem}\label{thm: maintype2}
  Let  $L^{m|n}(\Upsilon)$ be a finite-dimensional type 2 unitary simple $\mathfrak{gl}_{m|n}$-module with highest weight $\Upsilon\in D^{m|n}_+$. Then    $L^{m|n}(\Lambda)$ decomposes into a  multiplicity-free direct sum of  type 2 unitary simple $\mathfrak{gl}_{m|n-1}$-modules: 
  \[
  L^{m|n}(\Upsilon)\cong \bigoplus_{\Upsilon \Downarrow \Upsilon'} L^{m|n-1}(\Upsilon'). 
  \]
\end{theorem}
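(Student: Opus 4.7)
The plan is to deduce the type 2 branching rule directly from the type 1 branching rule (Theorem~\ref{thm: main}) by passing to duals, thereby reducing everything to Lemma~\ref{lem: dualdec}. First, because $L^{m|n}(\Upsilon)$ is type 2 unitary with $\Upsilon \in D^{m|n}_+$, Proposition~\ref{prop: type2} (applied in the reverse direction) guarantees the existence of a unique type 1 unitary highest weight $\Lambda \in D^{m|n}_+$ such that $\Upsilon = \Lambda^{\ast}$ and $L^{m|n}(\Upsilon) \cong L^{m|n}(\Lambda)^{\ast}$ as $\mathfrak{gl}_{m|n}$-modules. So I would begin by fixing this $\Lambda$ and reducing the problem to understanding the restriction of $L^{m|n}(\Lambda)^{\ast}$ to $\mathfrak{gl}_{m|n-1}$.

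Next, I would invoke Lemma~\ref{lem: dualdec} verbatim to obtain the decomposition
\[
L^{m|n}(\Lambda)^{\ast} \;\cong\; \bigoplus_{\Lambda^{\ast}\,\Downarrow\,\Lambda'^{\,\ast}} L^{m|n-1}(\Lambda')^{\ast}.
\]
Rewriting with $\Upsilon = \Lambda^{\ast}$ and setting $\Upsilon' := \Lambda'^{\,\ast}$, the index set becomes precisely $\{\Upsilon' : \Upsilon \Downarrow \Upsilon'\}$, so the right-hand side matches the one claimed in the theorem, provided each summand $L^{m|n-1}(\Lambda')^{\ast}$ is recognised as the type 2 unitary module with highest weight $\Upsilon'$.

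To close the argument, I would invoke Proposition~\ref{prop: type2} one more time: every $\Lambda'$ appearing in the type 1 decomposition (as in Theorem~\ref{thm: main}) is itself type 1 unitary for $\mathfrak{gl}_{m|n-1}$, so its dual $L^{m|n-1}(\Lambda')^{\ast} \cong L^{m|n-1}(\Upsilon')$ is type 2 unitary with highest weight $\Upsilon' = \Lambda'^{\,\ast}$. Multiplicity-freeness is inherited from Theorem~\ref{thm: main} via the exactness of the duality functor, since $(\cdot)^{\ast}$ is an involutive equivalence on finite-dimensional modules.

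There is no substantive obstacle beyond bookkeeping. The only point requiring care is the bijective correspondence $\Lambda' \mapsto \Lambda'^{\,\ast}$ between the weights satisfying $\Lambda \downarrow \Lambda'$ and those satisfying $\Upsilon \Downarrow \Upsilon'$, but this compatibility is exactly what was built into the definition of $\Upsilon \Downarrow \Upsilon'$ and verified in the proof of Lemma~\ref{lem: dualdec} (separating the typical case via formula \eqref{eq: dualLa} from the atypical case via the polynomial-module realisation and \eqref{eq: polyint}). Thus the theorem follows as a direct corollary, and the proof amounts essentially to a single line citing Proposition~\ref{prop: type2} and Lemma~\ref{lem: dualdec}.
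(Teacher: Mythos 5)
Your proposal is correct and follows essentially the same route as the paper: identify the type 2 module $L^{m|n}(\Upsilon)$ with the dual of a type 1 unitary module via Proposition~\ref{prop: type2} (the paper phrases this as $\Lambda=\Upsilon^{*}$ together with double duality $L^{**}\cong L$, which is the same bookkeeping as your $\Upsilon=\Lambda^{*}$), then apply Lemma~\ref{lem: dualdec} and read off the index set $\Upsilon\Downarrow\Upsilon'$. No gaps; your extra remarks on multiplicity-freeness and the weight correspondence are exactly the content already verified in Lemma~\ref{lem: dualdec}.
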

\begin{proof}
By \propref{prop: type2}, the dual module $L^{m|n}(\Upsilon)^*$ is a type 1 unitary module  with highest weight $\Upsilon^*$. Note that the double dual induces an isomorphism $L^{m|n}(\Upsilon)^{**}\cong L^{m|n}(\Upsilon)$ as $\mathfrak{gl}_{m|n}$-modules with the same highest weight $\Upsilon$. Applying \lemref{lem: dualdec} to $L^{m|n}(\Upsilon)^*$, we obtain the desired decomposition.
\end{proof}

\subsection{Some remarks}
We conclude this section with some remarks.

If $L(\Lambda)$ is an atypical unitary simple $\mathfrak{gl}_{m|n}$-module, then by \lemref{lem: atypoly}, it is a polynomial module corresponding to an $(m,n)$-hook partition $\lambda$, up to a twist by a one-dimensional module. In terms of Young diagrams, the interlacing condition $\Lambda \downarrow \Lambda^{\prime}$ is equivalent to the skew diagram $\lambda/\lambda^{\prime}$ being a vertical strip. By \eqref{eq: dualdec}, this Young diagrammatic description also holds for atypical type 2 unitary simple $\mathfrak{gl}_{m|n}$-modules. Therefore, the Young diagrammatic description of the branching rules for atypical unitary $\mathfrak{gl}_{m|n}$-modules remains unchanged when taking the dual. For the typical unitary case, the algebraic description of the branching rules remains unchanged under duality, as seen from conditions (C1), (C1$'$) and (C2).

Unitary branching rules naturally extend to quantum groups ${\rm U}_q(\mathfrak{gl}_{m|n})$ for any positive real number $q$. As shown in \cite{GS95}, all finite-dimensional unitary simple $\mathfrak{gl}_{m|n}$-modules quantise to yield unitary simple ${\rm U}_q(\mathfrak{gl}_{m|n})$-modules. Moreover, the classification of types 1 and 2 unitary simple ${\rm U}_q(\mathfrak{gl}_{m|n})$-modules in terms of highest weights is the same as in the $\mathfrak{gl}_{m|n}$ case. Consequently, the branching rules given in \thmref{thm: main} and \thmref{thm: maintype2} apply directly to  simple unitary ${\rm U}_q(\mathfrak{gl}_{m|n})$-modules.

\vspace*{0.2cm}
\noindent {\bf Acknowledgement.}
We thank Professor Ruibin Zhang for stimulating discussions.

\end{document}